\numberwithin{equation}{section}
\newtheorem{Theorem}{Theorem}[section]
\newtheorem*{Theorem*}{Theorem}
\newtheorem{Corollary}[Theorem]{Corollary}
\newtheorem{Proposition}[Theorem]{Proposition}
\newtheorem{Claim}[Theorem]{Claim}
 { \theoremstyle{definition}

\newtheorem{Remark}[Theorem]{Remark}
\newtheorem{Problem}[Theorem]{Problem}
}
\def\({\left(}
\def\){\right)}
\def\lp{\left(}
\def\rp{\right)}
\begin{document}

\newcommand{\arXivNumber}{2408.16902}

\renewcommand{\thefootnote}{}

\renewcommand{\PaperNumber}{026}

\FirstPageHeading

\ShortArticleName{Zeros of Hook Polynomials and Related Questions}

\ArticleName{Zeros of Hook Polynomials and Related Questions\footnote{This paper is a~contribution to the Special Issue on Basic Hypergeometric Series Associated with Root Systems and Applications in honor of Stephen C.~Milne's 75th birthday. The~full collection is available at \href{https://www.emis.de/journals/SIGMA/Milne.html}{https://www.emis.de/journals/SIGMA/Milne.html}}}

\Author{Walter BRIDGES~$^{\rm a}$, William CRAIG~$^{\rm b}$, Amanda FOLSOM~$^{\rm c}$ and Larry ROLEN~$^{\rm d}$}

\AuthorNameForHeading{W.~Bridges, W.~Craig, A.~Folsom and L.~Rolen}

\Address{$^{\rm a)}$~University of North Texas, USA}
\EmailD{\href{mailto:Walter.Bridges@unt.edu}{Walter.Bridges@unt.edu}}

\Address{$^{\rm b)}$~United States Naval Academy, USA}
\EmailD{\href{mailto:wcraig@usna.edu}{wcraig@usna.edu}}

\Address{$^{\rm c)}$~Amherst College, USA}
\EmailD{\href{mailto:afolsom@amherst.edu}{afolsom@amherst.edu}}

\Address{$^{\rm d)}$~Vanderbilt University, USA}
\EmailD{\href{mailto:larry.rolen@vanderbilt.edu}{larry.rolen@vanderbilt.edu}}

\ArticleDates{Received September 03, 2024, in final form April 10, 2025; Published online April 21, 2025}

\Abstract{We study the zero set of polynomials built from partition statistics, complementing earlier work in this direction by Boyer, Goh, Parry, and others. In particular, addressing a question of Males with two of the authors, we prove asymptotics for the values of $t$-hook polynomials away from an annulus and isolated zeros of a theta function. We also discuss some open problems and present data on other polynomial families, including those associated to deformations of Rogers--Ramanujan functions.}

\Keywords{integer partitions; hook length; zeros of polynomials; zero attractor; asymptotic behavior; theta functions}

\Classification{05A17; 30C15; 05A15; 11P82; 11F27}

\renewcommand{\thefootnote}{\arabic{footnote}}
\setcounter{footnote}{0}

\begin{flushright}
\begin{minipage}{58mm}
\it Dedicated to Stephen C.~Milne\\ on the occasion of his 75th birthday
\end{minipage}
\end{flushright}

\section{Introduction}

For a positive integer $n$, a {\it partition} of $n$ is a non-increasing sequence $\lambda = \lp \lambda_1, \dots, \lambda_\ell \rp$ of positive integers such that $|\lambda| := \lambda_1 + \dots + \lambda_\ell = n$. We write $\lambda \vdash n$ to denote that $\lambda$ is a partition of $n$, and we let $\ell = \ell\lp \lambda \rp$ be the number of parts of $\lambda$. Since the very beginning of partition theory, generating functions have been an essential tool. In the last century, the {\it circle method}
as developed first by Hardy and Ramanujan \cite{HR} has become an indispensable tool in studying the coefficients of these generating functions as well as many counting problems in additive combinatorics and number theory, including the Waring problem and weak Goldbach conjecture. Hardy and Ramanujan used the generating function for the {\it partition function} $p(n) := \#\{ \lambda \vdash n \}$, given for $|q|<1$ by
\begin{align*}
 \sum_{n \geq 0} p(n) q^n = \prod_{n \geq 1} \dfrac{1}{1 - q^n},
\end{align*}
in order to prove an asymptotic series for $p(n)$ as $n \to \infty$, whose main term yields
\begin{align*}
 p(n) \sim \dfrac{1}{4n\sqrt{3}} {\rm e}^{\pi\sqrt{\frac{2n}{3}}}.
\end{align*}

We will consider applications of the circle method to partition statistics. An important example in this respect is the number of parts $\ell\lp \lambda \rp$ of partitions. One can keep track of both the size and number of parts simultaneously in generating function form by using multiple variables; indeed, it is not hard to see that
\begin{align*}
 \sum_{\lambda} w^{\ell\lp\lambda\rp} q^{|\lambda|} = \prod_{n \geq 1} \dfrac{1}{1 - w q^n}.
\end{align*}
Properties of the parts of partitions are then encoded by the polynomials
\begin{equation}\label{E:QnwDEF}Q_n(w) := \sum_{\lambda \vdash n} w^{\ell\lp\lambda\rp},\end{equation}
which also form the power series coefficients in the $q$-variable of the above generating function. Indeed, every statistic on partitions possesses its own variation on the $Q_n(w)$ polynomials, and these polynomials can encode deep information about partitions. For example, the famous Ramanujan congruences
\begin{align*}
 p(5n+4) \equiv 0 \pmod{5}, \qquad p(7n+5) \equiv 0 \pmod{7}, \qquad p(11n+6) \equiv 0 \pmod{11}
\end{align*}
can be explained directly in terms of the divisibility of the {\it rank} and {\it crank polynomials} by cyclotomic polynomials for $n$ in the given arithmetic progressions (see, for instance, \cite{AG,BGRT,Garvan}).

This example illustrates the core philosophy of this paper: to study partition statistics by investigating the corresponding polynomials. In addition to divisibility relations, perhaps the most natural property of a set of polynomials to study is its set of roots. In recent years, there has been growing interest in studying roots of polynomials that encode data from number-theoretic objects. For instance, zeros of period polynomials of newforms have been shown by Jin, Ma, Ono, and Soundararajan to satisfy a ``Riemann hypothesis'' \cite{JMOS}. Another example is the work of Griffin, Ono, Zagier, and one of the authors \cite{GORZ} showing that the so-called Jensen polynomials encoded by integer partition numbers eventually have real roots. This directly implies that an infinite sequence of inequalities are eventually satisfied by partition numbers.
 From this point of view, it is natural to study the zeros of sequences of partition polynomials formed from important partition statistics. Indeed, this project was partly inspired by Boyer and Goh's fascinating study of the {\it zero attractor}\footnote{Roughly speaking, this is the set of limit points in $\mathbb{C}$ of the zeros of a sequence of polynomials.} of the number-of-parts polynomials~$Q_n(w)$~\mbox{\cite{BG1,BG2}} (see also \cite{BoyerParry1, BoyerParry3,BoyerParry2,BoyerParry4,BFG,BFM,Parry}).

In this paper, we will approach a problem of two of the authors and Males \cite{FMR} on {\it hook polynomials} which arose in their study of the connection between zeros of partition polynomials and crank statistics. In order to explain the problem, we require hook numbers of partitions. For each cell of the Young diagram of a partition $\lambda$, the {\it hook number} of this cell is the number of boxes that form the inverted $L$-shape whose corner resides at the cell in question. For instance, the hook numbers of the partition $\lambda = \lp 5, 2, 1 \rp$ are given as
\begin{figure}[ht] ${\hspace{0.4in} \begin{ytableau} 7&5&3&2&1 \\ 3&1 \\ 1 \end{ytableau}}$
\end{figure}

 We let $\mathcal H(\lambda)$ be the multiset of all hook numbers of $\lambda$ and the $t$-hook multiset as
\begin{align*}
 \mathcal H_t(\lambda) := \{ h \in \mathcal H(\lambda) : t \mid h \}.
\end{align*}
In order to keep track of the number of $t$-hooks of partitions, we use a formula for the generating function due to Han, which derives from his extensions of the Nekrasov--Okounkov hook lengths formula \cite{Han,NekOk}. To state this theorem and those that follow, we define for each $t \geq 1$ the generating functions
\begin{align*}
 H_t\lp w;q \rp := \sum_{n \geq 0} P_t(w;n) q^n := \sum_{\lambda} w^{\#\mathcal H_t(\lambda)} q^{|\lambda|} .
\end{align*}

The following representation for $H_t(w;q)$ is proven in \cite[Corollary 5.1]{Han}.

\begin{Theorem}
 We have for $t \geq 2$ that
 \begin{align*}
 H_t(w;q) = \prod_{n \geq 1} \frac{1}{(1-(wq^t)^n)^t}\prod_{n \geq 1} \frac{\big(1-q^{tn}\big)^t}{1-q^n}.
 \end{align*}
\end{Theorem}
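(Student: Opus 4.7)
The plan is to exploit the classical $t$-core/$t$-quotient bijection. Every partition $\lambda$ decomposes uniquely as a pair $\lp \mu, \lp \lambda^{(0)}, \ldots, \lambda^{(t-1)} \rp \rp$, where $\mu$ is a $t$-core (that is, $\calH_t(\mu) = \emptyset$) and $\lp \lambda^{(0)}, \ldots, \lambda^{(t-1)} \rp$ is an ordered $t$-tuple of ordinary partitions, the $t$-quotient of $\lambda$. This bijection satisfies $|\lambda| = |\mu| + t \sum_{i=0}^{t-1} \big|\lambda^{(i)}\big|$, and the crucial fact that the multiset $\calH_t(\lambda)$ is in bijection with $t \cdot \lp \calH\lp \lambda^{(0)} \rp \sqcup \cdots \sqcup \calH\lp \lambda^{(t-1)} \rp \rp$; in particular,
\begin{equation*}
\# \calH_t(\lambda) \;=\; \sum_{i=0}^{t-1} \big|\lambda^{(i)}\big|,
\end{equation*}
since the number of cells of any partition $\nu$ equals $\#\calH(\nu)$.

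Inserting this decomposition into the definition of $H_t(w;q)$ and separating the sum over $t$-cores from the sum over $t$-quotients yields
\begin{equation*}
H_t(w;q) \;=\; \Bigg( \sum_{\mu\ t\text{-core}} q^{|\mu|} \Bigg) \cdot \Bigg( \sum_{\nu} \lp w q^t \rp^{|\nu|} \Bigg)^{\! t}.
\end{equation*}
The second factor equals $\prod_{n\geq 1} \lp 1 - (wq^t)^n \rp^{-t}$ by Euler's partition identity applied with base variable $wq^t$. For the first factor, I would invoke the classical generating function for $t$-cores due to Klyachko and Garvan--Kim--Stanton, which gives $\sum_{\mu\ t\text{-core}} q^{|\mu|} = \prod_{n\geq 1} \big(1 - q^{tn}\big)^t / \lp 1 - q^n \rp$. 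Multiplying the two identities recovers precisely the claimed product expression.

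The main technical step, and the real content of the proof, is the hook-preservation statement: that the $t$-hooks of $\lambda$ correspond exactly to $t$ times the hooks of its $t$-quotient. This is most transparently verified on the level of James's abacus (beta-set) model of partitions, where the $t$-abacus of $\lambda$ splits into $t$ independent runners, each encoding one $\lambda^{(i)}$; removing a $t$-hook of length $th$ from $\lambda$ corresponds to sliding a single bead one position up on some runner $i$, which is exactly the removal of an $h$-hook from $\lambda^{(i)}$. Iterating, one obtains the bijection of multisets and hence the identity $\#\calH_t(\lambda) = \sum_i \big|\lambda^{(i)}\big|$. Given this combinatorial input, the remainder of the proof is an elementary manipulation of generating functions.
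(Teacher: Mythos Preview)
The paper does not give its own proof of this theorem; it simply records the identity and attributes it to Han \cite[Corollary~5.1]{Han}. Your argument is correct: the $t$-core/$t$-quotient bijection with the size relation $|\lambda|=|\mu|+t\sum_i|\lambda^{(i)}|$ and the hook correspondence $\mathcal H_t(\lambda)\leftrightarrow t\cdot\bigsqcup_i\mathcal H\bigl(\lambda^{(i)}\bigr)$ immediately yields $\#\mathcal H_t(\lambda)=\sum_i|\lambda^{(i)}|$, after which the generating function factors exactly as you wrote, and the $t$-core generating function of Klyachko/Garvan--Kim--Stanton finishes the job.

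This is not really a different route from the cited source. Han's paper \cite{Han} builds its entire framework on the same $t$-core/$t$-quotient decomposition (this is how he gives his elementary proof of the Nekrasov--Okounkov formula), and his Corollary~5.1 is obtained precisely by tracking the $t$-hooks through that bijection. So your proposal is essentially a self-contained write-up of the argument the paper is citing, rather than an alternative approach.
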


Males and two of the authors \cite{FMR} have asked how the zeros of the polynomials $P_t(w,n)$ are distributed as $n \to \infty$. The data for $t \geq 6$ seems to indicate that these polynomials have isolated zeros outside the unit circle, and chaotic patterns in an annulus inside the unit circle (see Figures~\ref{HookZeros5mod7} and~\ref{HookZeros6mod7}). There also appears to be a difference in plots across the residue class of~${n \pmod{t}}$. We make no attempt here to describe a zero attractor, but we prove asymptotics for $P_t(w,n)$ that imply that there are no zeros outside a certain annulus, with the exception of isolated points coming from zeros of a theta function.

\begin{figure}[t]
\centering
\includegraphics[scale=.26]{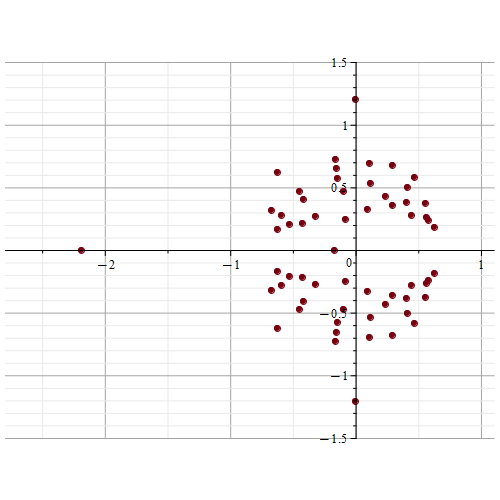}
 \includegraphics[scale=.26]{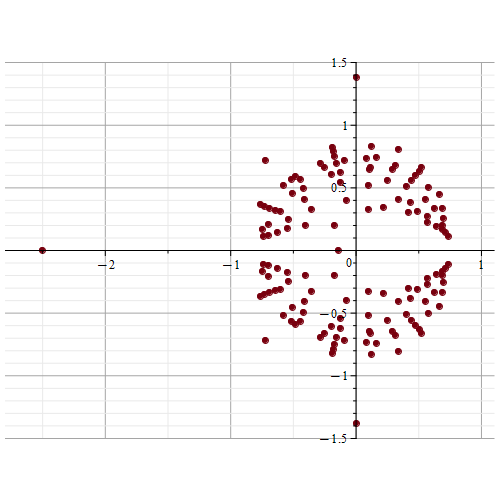}
 \includegraphics[scale=.26]{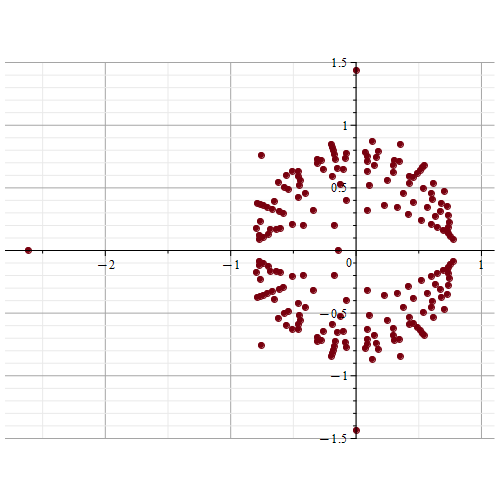}
\caption{Zeros of $P_7(w,n)$ for $n=425,985$ and $1405$.}
\label{HookZeros5mod7}
\end{figure}

\begin{figure}[t]
\centering
\includegraphics[scale=.25]{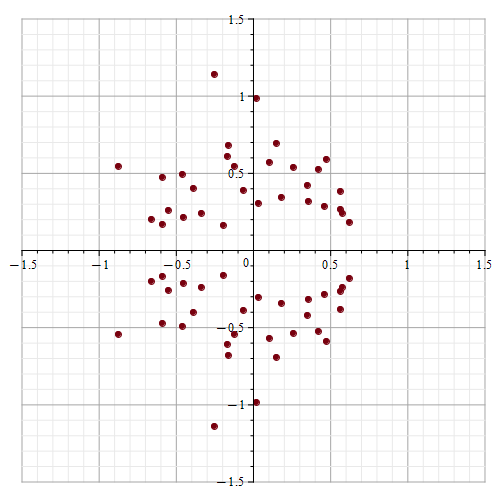} \hspace{.08in}
 \includegraphics[scale=.25]{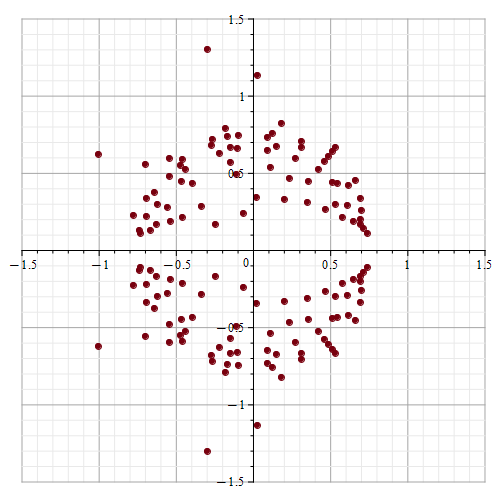}
 \includegraphics[scale=.25]{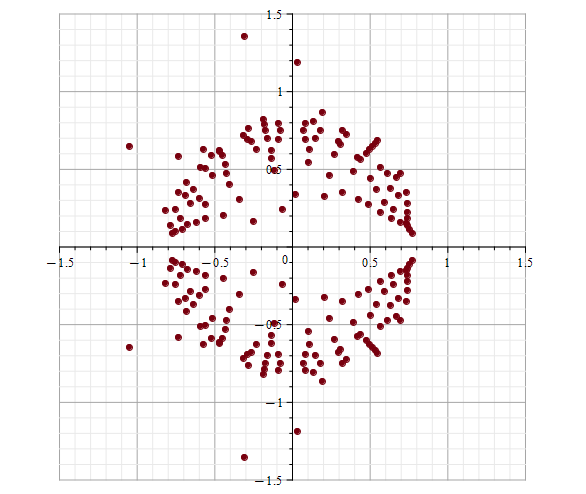}
\caption{Zeros of $P_7(w,n)$ for $n=426$, $986$ and $1406$.}
\label{HookZeros6mod7}
\end{figure}

 To state the asymptotics for $P_t(w,n)$ in the case $|w|>1$, we define, for $|z|<1$, the theta function
\begin{align*}
 \Theta_{\ell,t}(z):=\sum_{\substack{\boldsymbol{m}\in \mathbb{Z}^t \\ \boldsymbol{1} \cdot \boldsymbol{m}=0 \\ \boldsymbol{b}\cdot \boldsymbol{m} \equiv \ell \ ({\text{mod}\, {t}}) }} z^{\frac{1}{2}\|\boldsymbol{m}\|^2+\frac{1}{t}\boldsymbol{b}\cdot \boldsymbol{m}},
\end{align*} where $\boldsymbol{b}:=(0,1,\dots,t-1)$. For $\epsilon>0$, let
\begin{align*}
\mathcal{Z}_{\ell,t}(\epsilon):=\bigl\{z : |z|\geq 1+\epsilon, \, \Theta_{\ell,t}\big(z^{-1}\big)=0\bigr\}.
\end{align*}
Since $\Theta_{\ell,t}(z)$ is analytic and nonzero in $|z|<1$, it follows that $\#\mathcal{Z}_{\ell,t}(\epsilon)<\infty$ for all $\epsilon>0$.

\begin{Theorem} \label{|w|>1 Asymptotics}
 Let $t
 \geq 1$ and let $\epsilon>0$. Then uniformly for $|w|>1+\epsilon$ and $w$ not in an $\epsilon$-neighborhood of $\mathcal{Z}_{\ell,t}(\epsilon)$, we have, as $n \to \infty$ along the residue class $\ell \pmod{t}$,
 \begin{align*}
P_t(w,n)= {\rm e}^{\pi \sqrt{\frac{2n}{3}}}w^{\frac{n}{t}} \Theta_{\ell,t}\big(w^{-1}\big)\frac{1}{2^{\frac{5+t}{4}}3^{\frac{1+t}{4}}\pi^{\frac{3+t}{2}}n^{\frac{3+t}{4}}}(1+o(1)).
 \end{align*}
\end{Theorem}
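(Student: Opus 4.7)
I would combine Han's formula with the $t$-core/$t$-quotient identification to reduce this to a classical partition asymptotic. Writing $C_t(q):=\prod_n(1-q^{tn})^t/(1-q^n)$ for the $t$-core generating function and $p_t(k):=[q^k]\prod_n(1-q^n)^{-t}$, the factorisation $H_t(w;q)=C_t(q)\prod_n(1-(wq^t)^n)^{-t}$, the expansion $\prod_n(1-(wq^t)^n)^{-t}=\sum_k w^k p_t(k)q^{tk}$, and the parametrisation of $t$-cores by $\boldsymbol m\in\mathbb Z^t$ with $\boldsymbol 1\cdot\boldsymbol m=0$ and $|\nu(\boldsymbol m)|=\frac{t}{2}\|\boldsymbol m\|^2+\boldsymbol b\cdot\boldsymbol m$ together yield, after extracting $[q^n]$ and pulling $w^{n/t}$ outside, the exact identity
\[
P_t(w,n)=w^{n/t}\!\sum_{\substack{\boldsymbol 1\cdot\boldsymbol m=0\\ \boldsymbol b\cdot\boldsymbol m\equiv\ell\pmod{t}}}\!\bigl(w^{-1}\bigr)^{\frac12\|\boldsymbol m\|^2+\frac1t\boldsymbol b\cdot\boldsymbol m}\,p_t\!\left(\tfrac{n-|\nu(\boldsymbol m)|}{t}\right).
\]
The theta sum defining $\Theta_{\ell,t}(w^{-1})$ is already present; the problem reduces to estimating the $p_t$ weights uniformly in $\boldsymbol m$.

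\textbf{Asymptotic step.} By the Hardy--Ramanujan--Rademacher asymptotic for $1/\eta^t$, one has $p_t(k)\sim c\,k^{-(t+3)/4}e^{\pi\sqrt{2tk/3}}$ for an explicit constant $c=c(t)$ arising from the leading Bessel term. Set $N:=(n-\ell)/t$, so that $(n-|\nu(\boldsymbol m)|)/t=N-\Delta(\boldsymbol m)$ with $\Delta(\boldsymbol m)=O(\|\boldsymbol m\|^2)$. For each fixed $\boldsymbol m$, $p_t(N-\Delta)/p_t(N)\to 1$ as $n\to\infty$; simultaneously, since $|w^{-1}|<1$ the weight $|w^{-1}|^{\|\boldsymbol m\|^2/2}$ gives Gaussian decay in $\|\boldsymbol m\|$, so the weighted sum collapses to $\Theta_{\ell,t}(w^{-1})\,p_t(N)(1+o(1))$. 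Substituting $N\sim n/t$ into the $p_t$-asymptotic converts the exponent $\pi\sqrt{2tN/3}$ to $\pi\sqrt{2n/3}$ and the factor $N^{-(t+3)/4}$ to a constant multiple of $n^{-(t+3)/4}$, producing the stated form.

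\textbf{Main obstacle.} The technical heart is establishing uniformity in $w$ across $|w|>1+\varepsilon$. This requires an effective Rademacher-type estimate for $p_t(N-\Delta)/p_t(N)$ with controlled polynomial dependence on $\|\boldsymbol m\|$, paired with a uniform tail bound $\sum_{\|\boldsymbol m\|>M}|w^{-1}|^{\|\boldsymbol m\|^2/2}=o(1)$ for $|w|\geq 1+\varepsilon$; one then chooses a cutoff $M=M(n)$ growing slowly enough that both the truncation error and the Rademacher error are of order $o(|\Theta_{\ell,t}(w^{-1})|)$. This last comparison is precisely why one must exclude an $\varepsilon$-neighbourhood of $\mathcal Z_{\ell,t}(\varepsilon)$: at zeros of $w\mapsto\Theta_{\ell,t}(w^{-1})$ the leading term itself vanishes, so the multiplicative $(1+o(1))$ form cannot hold without passing to subleading corrections.
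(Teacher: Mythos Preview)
Your approach is correct and genuinely different from the paper's. The paper applies the circle method directly to $H_t(w;q)$: it integrates over the circle $|q|=|w|^{-1/t}e^{-x_n}$, makes the substitution $\theta\mapsto(\theta+j)/t$ to produce the sum over $t$-th roots of unity, performs a Farey dissection, uses the modular transformation of $(q;q)_\infty^{-t}$ on each arc, and evaluates the dominant $k=1$ arc by Laplace's method; the theta function $\Theta_{\ell,t}(w^{-1})$ emerges only at the end, as the sum over $j$ of the boundary values of the analytic factor $(w^{-1}q^t)_\infty^t/(w^{-1/t}q)_\infty$. You instead invoke the $t$-core/$t$-quotient bijection (via Garvan--Kim--Stanton) to obtain an \emph{exact} convolution identity for $P_t(w,n)$ in which $\Theta_{\ell,t}(w^{-1})$ is visible from the outset as the weighted $t$-core sum, and then apply the known Hardy--Ramanujan asymptotic for $p_t(N)=[q^N](q;q)_\infty^{-t}$ as a black box, collapsing the convolution by dominated convergence. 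Your route is conceptually cleaner and explains more transparently \emph{why} the theta function appears; the paper's route is more self-contained analytically and keeps uniformity in $w$ explicit throughout the integral estimates rather than deferring it to a separate cutoff-and-tail argument. Two small points to tighten: you should note that $p_t$ is eventually monotone so that $p_t(N-\Delta)\le p_t(N)$ furnishes the dominating sequence uniformly in $|w|\ge 1+\varepsilon$, and you should verify that the constant $c(t)$ from the $p_t$-asymptotic, after substituting $N\sim n/t$, indeed reproduces $2^{-(5+t)/4}3^{-(1+t)/4}\pi^{-(3+t)/2}$ (it does, but this is worth writing out once).
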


 In the case $|w|<1$, our asymptotic results, as well as the regions in which they hold, depend on the function $A_t(w,n)$ defined in equation \eqref{A Definition}.

\begin{Theorem} \label{|w|<1 Asymptotics}
 Let $t \geq 6$. Let $w_0=w_0(t)$ be chosen, independent of $n$, so that $A_t(w,n) \neq 0$ for all $|w|\leq w_0$ $($see Section $\ref{S:smallw})$. Then uniformly for $|w|\leq w_0$, we have, as $n \to \infty$,
 \begin{align*}
 P_t(w,n)= \frac{(2\pi)^{\frac{t-1}{2}}A_t(w,n)}{t^{t/2}\Gamma\bigl(\frac{t-1}{2}\bigr)}n^{\frac{t-3}{2}} + O\big(n^{\frac{t-1}{4}}\big).
 \end{align*}
\end{Theorem}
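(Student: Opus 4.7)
My plan is to extract $P_t(w,n)$ from Han's product via a Wright-type circle method, exploiting the fact that for $|w|$ small the ``hook'' factor is a bounded perturbation of the $t$-core generating function, which is an eta-quotient. Starting from Cauchy's integral
\begin{align*}
P_t(w,n)=\frac{1}{2\pi i}\oint_{|q|=e^{-s_0}} H_t(w;q)\,\frac{dq}{q^{n+1}},
\end{align*}
I would split the integrand, via the Theorem of Han, as $H_t(w;q)=F_t(w;q)G_t(q)$, where
\begin{align*}
G_t(q):=\prod_{m\geq 1}\frac{(1-q^{tm})^t}{1-q^m},\qquad F_t(w;q):=\prod_{m\geq 1}\frac{1}{\bigl(1-(wq^t)^m\bigr)^t}.
\end{align*}
The second factor is holomorphic and uniformly bounded on a neighbourhood of $|q|\leq 1$ once $|w|\leq w_0$ is sufficiently small, with $F_t(w;1)=\prod_{m\geq 1}(1-w^m)^{-t}$. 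The first factor is, up to a fractional power of $q$, the weight-$(t-1)/2$ eta-quotient $\eta(t\tau)^t/\eta(\tau)$; the modular transformation of $\eta$ yields the cusp expansion
\begin{align*}
G_t\bigl(e^{-s}\bigr)\sim \frac{(2\pi)^{(t-1)/2}}{t^{t/2}}\,s^{(1-t)/2}\qquad (s\to 0^+),
\end{align*}
since the $e^{\pi^2/(6s)}$ singularity of $\prod(1-q^m)^{-1}$ cancels exactly against the decay of $\prod(1-q^{tm})^t$. This purely polynomial behaviour at $q=1$ is precisely why one expects polynomial (rather than exponential) growth of $P_t(w,n)$.

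Next I would take $s_0\asymp n^{-1/2}$ and partition the contour $|q|=e^{-s_0}$ into a major arc around $q=1$ (and, more generally, around each rational $e^{2\pi i h/k}$ appearing in a Farey dissection of level $t$) together with the complementary minor arcs. On the major arc near $q=1$, writing $q=e^{-s}$ and applying the modular inversion $s\mapsto 4\pi^2/s$ converts $G_t(q)$ into an expression in a dual nome tending to $0$; after deforming onto a Hankel contour the resulting integral collapses to an inverse-Mellin integral of the form $\int s^{(1-t)/2}e^{ns}\,ds$, producing the factor
\begin{align*}
\frac{(2\pi)^{(t-1)/2}}{t^{t/2}\,\Gamma\bigl(\tfrac{t-1}{2}\bigr)}\,n^{(t-3)/2}
\end{align*}
multiplied by the limiting value of $F_t(w;q)$ near $q=1$ and by analogous contributions from the secondary cusps. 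These pieces together build the function $A_t(w,n)$ of \eqref{A Definition}; the hypothesis $A_t(w,n)\neq 0$ for $|w|\leq w_0$ is exactly what promotes the resulting identity to a genuine asymptotic rather than a mere upper bound.

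The remaining steps are (i) bounding the minor arcs via a uniform estimate on $\eta$ away from the cusps of $\Gamma_0(t)$ combined with a uniform bound on $F_t(w;q)$ for $|w|\leq w_0$, and (ii) controlling the saddle-point tails of the major-arc integral. Both contributions should come in at $O\bigl(n^{(t-1)/4}\bigr)$, and this is precisely where the hypothesis $t\geq 6$ enters: it is the smallest value for which $(t-3)/2 > (t-1)/4$, so the main term strictly dominates the error. The main obstacle I anticipate is producing the minor-arc bound with this sharp exponent, since it requires uniform treatment of the eta-quotient $G_t$ at \emph{every} cusp of level $t$, together with explicit control of $F_t(w;q)$ on the contour that is uniform in $|w|\leq w_0$ and avoids interference with the selection of $w_0$ in Section~\ref{S:smallw}.
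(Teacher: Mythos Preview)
Your proposal is correct and follows essentially the same route as the paper: both factor $H_t(w;q)$ via Han's identity into the $t$-core eta-quotient times a bounded $w$-dependent factor, run a Farey-dissection circle method in which that factor is frozen at its cuspidal value $\bigl(we^{2\pi i ht/k}\bigr)_\infty^{-t}$, and then invoke Anderson's analysis of the $t$-core generating function verbatim to produce the main term $n^{(t-3)/2}$ with the twisted Kloosterman-type sum $A_t(w,n)$ and error $O\bigl(n^{(t-1)/4}\bigr)$. One minor correction: the saddle radius should be $s_0\asymp n^{-1}$ (Anderson uses $\mathrm{Re}(z)=N^{-2}$ with $N=\lfloor\sqrt{n}\rfloor$), not $n^{-1/2}$, since the growth here is polynomial rather than exponential; and the sum defining $A_t(w,n)$ is restricted to $\gcd(k,t)=1$, so only those cusps contribute to the leading order.
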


 The following result controls the zero set of $P_t(w,n)$ and is an immediate corollary of the uniform growth of $P_t(w,n)$ in the respective regions of Theorems \ref{|w|>1 Asymptotics} and \ref{|w|<1 Asymptotics}.

\begin{Corollary}
 Let $t \geq 6$ and let $\ell \in \mathbb{N}$. For all $\epsilon>0$, there exists an $n_0=n_0(\epsilon,\ell,t)$ such that the zeros of $P_t(w,n)$ for $n \equiv \ell \pmod{t}$ and $n \geq n_0$ lie either in an $\epsilon$-neighborhood of $\mathcal{Z}_{\ell,t}(\epsilon)$ or in the annulus $w_0\leq |w|\leq 1+\epsilon$.
\end{Corollary}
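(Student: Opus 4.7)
The plan is simply to combine the two asymptotic formulas: in each of the two prescribed regions, the uniform asymptotics of Theorems~\ref{|w|>1 Asymptotics} and~\ref{|w|<1 Asymptotics} pin down the size of $P_t(w,n)$ away from zero once $n$ is large, and the regions together exhaust the complement of an $\epsilon$-neighborhood of $\mathcal{Z}_{\ell,t}(\epsilon)$ union the annulus $w_0\leq |w|\leq 1+\epsilon$.

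First I would handle the outer region. Fix $\epsilon>0$ and let $U$ denote the set of $w$ with $|w|>1+\epsilon$ lying outside the $\epsilon$-neighborhood of $\mathcal{Z}_{\ell,t}(\epsilon)$. Since $\Theta_{\ell,t}(z)$ is analytic in $|z|<1$, and $\mathcal{Z}_{\ell,t}(\epsilon)$ records exactly those $w$ with $|w|\geq 1+\epsilon$ at which $\Theta_{\ell,t}(w^{-1})$ vanishes, a standard compactness / removal-of-zeros argument shows that $|\Theta_{\ell,t}(w^{-1})|$ is bounded uniformly below on $U$ (more precisely: on each compact piece, and one controls large $|w|$ by the fact that $\Theta_{\ell,t}(w^{-1})\to 1$ as $|w|\to\infty$). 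Theorem~\ref{|w|>1 Asymptotics} then writes
\begin{align*}
P_t(w,n) = \mathrm{e}^{\pi\sqrt{2n/3}}\, w^{n/t}\,\Theta_{\ell,t}\bigl(w^{-1}\bigr)\cdot\frac{1}{2^{(5+t)/4}3^{(1+t)/4}\pi^{(3+t)/2}n^{(3+t)/4}}\,(1+o(1)),
\end{align*}
uniformly on $U$, so $P_t(w,n)\neq 0$ for all $w\in U$ once $n\geq n_0^{(1)}(\epsilon,\ell,t)$ and $n\equiv \ell\pmod t$.

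Next I would treat the inner region. By the choice of $w_0$ and the uniform nonvanishing of $A_t(w,n)$ on $\{|w|\leq w_0\}$, Theorem~\ref{|w|<1 Asymptotics} gives
\begin{align*}
P_t(w,n) = \frac{(2\pi)^{(t-1)/2}A_t(w,n)}{t^{t/2}\Gamma\bigl((t-1)/2\bigr)}\,n^{(t-3)/2}+O\bigl(n^{(t-1)/4}\bigr),
\end{align*}
and the main term dominates the error precisely when $(t-3)/2>(t-1)/4$, i.e.\ $t>5$. Since $t\geq 6$ by hypothesis, this strict inequality holds, so $P_t(w,n)\neq 0$ on $\{|w|\leq w_0\}$ for $n\geq n_0^{(2)}(\epsilon,\ell,t)$. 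Taking $n_0=\max(n_0^{(1)},n_0^{(2)})$ and noting that the two regions together cover everything outside the $\epsilon$-neighborhood of $\mathcal{Z}_{\ell,t}(\epsilon)$ and the annulus $w_0\leq |w|\leq 1+\epsilon$, the corollary follows.

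The only genuine work is the uniform lower bound on the respective main-term factors — $|\Theta_{\ell,t}(w^{-1})|$ in the outer region and $|A_t(w,n)|$ (uniformly in both $w$ and $n$) in the inner region. For $\Theta_{\ell,t}$ this is straightforward from analyticity. For $A_t(w,n)$ one reads off uniformity from its explicit definition in \eqref{A Definition} in Section~\ref{S:smallw}, which is presumably the main input baked into the statement of Theorem~\ref{|w|<1 Asymptotics} itself; once this is granted, the corollary is immediate, exactly as claimed.
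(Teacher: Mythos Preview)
Your approach is correct and matches the paper's, which offers no proof beyond declaring the corollary ``an immediate corollary of the uniform growth of $P_t(w,n)$ in the respective regions of Theorems~\ref{|w|>1 Asymptotics} and~\ref{|w|<1 Asymptotics}.'' One small slip: the claim that $\Theta_{\ell,t}(w^{-1})\to 1$ as $|w|\to\infty$ is only true when $\ell\equiv 0\pmod t$; from the series form $\Theta_{\ell,t}(z)=(z)_\infty^t\sum_{m\equiv\ell}p(m)z^{m/t}$ one sees $\Theta_{\ell,t}(z)\sim p(\ell_0)z^{\ell_0/t}\to 0$ as $z\to 0$ when the least nonnegative residue $\ell_0$ is positive. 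Fortunately this does not affect your argument: since the $(1+o(1))$ in Theorem~\ref{|w|>1 Asymptotics} is uniform on $U$ and the remaining factors of the main term are nonzero there, $P_t(w,n)\neq 0$ follows as soon as $|o(1)|<1$, with no uniform lower bound on $|\Theta_{\ell,t}(w^{-1})|$ required. Your observation that the inner region genuinely needs $|A_t(w,n)|$ bounded below uniformly in \emph{both} $w$ and $n$ (because the error in Theorem~\ref{|w|<1 Asymptotics} is additive) is exactly right, and this is what the paper's discussion preceding Theorem~\ref{|w|<1 Asymptotics}, together with Anderson's bound $A_t(n)>0.05$, supplies.
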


\begin{Remark}
 It would be interesting to further describe asymptotics or the zero set of $P_t(w,n)$ in the region $w_0<|w|<1$. We also leave the optimization of $w_0$ and a deeper study of the function $A_t(w,n)$ as interesting follow-up problems.
\end{Remark}

The key technical difference in the hook polynomials $P_t(w,n)$ and the number-of-parts polynomials $Q_n(w)$ in \eqref{E:QnwDEF} studied by Boyer and Goh \cite{BG1,BG2} is that $w$ appears only in {\it some of} the factors in the product $H_t(w;q)$, rather than in all of them. Depending on whether $|w|<1$ or~${|w|>1}$, this changes the radius of convergence of $H_t(w;q)$ and leads to the two distinct cases of Theorems \ref{|w|>1 Asymptotics} and \ref{|w|<1 Asymptotics}. The general philosophy is analogous to Sokal's work on chromatic polynomials, where {\it phase transitions} determine zero attractors of a sequence of polynomials. In the case $|w|<1$, our proof of Theorem \ref{|w|<1 Asymptotics} is a straightforward adjustment of Anderson's proof of an asymptotic formula for the number of $t$-core partitions for $t \geq 6$ \cite{Anderson}. As such, we do not address the cases $t\in \{2,3,4,5\}$, where the asymptotics of $c_t(n)$ are less predictable; for example, $c_2(n)$ and $c_3(n)$ are known to be lacunary (see, e.g., \cite[pp.~333--334]{GranvilleOno}).

 In Section \ref{S: Asymptotics}, we prove Theorems \ref{|w|>1 Asymptotics} and \ref{|w|<1 Asymptotics}. In Section \ref{S: Conjectures}, we discuss some open problems and data on other polynomial families, especially in connection with deformations of Rogers--Ramanujan identities.

\section{Asymptotics for hook-length polynomials} \label{S: Asymptotics}

\subsection[The case |w|>1]{The case $\boldsymbol{|w|>1}$} 

Throughout this section, we use the abbreviated $q$-Pochhammer,
\begin{align*}
(a)_{\infty}:=(a;a)_{\infty}=\prod_{k \geq 1} \big(1-a^k\big).
\end{align*}
Note that by \cite[equation~(3.1)]{GKS}, we may rewrite
\begin{align*}
 \Theta_{\ell,t}(z)&{}=\sum_{\substack{\boldsymbol{m}\in \mathbb{Z}^t \\ \boldsymbol{1} \cdot \boldsymbol{m}=0 \\ \boldsymbol{b}\cdot \boldsymbol{m} \equiv \ell \ (\text{mod}\, {t}) }} \! z^{\frac{1}{2}\|\boldsymbol{m}\|^2+\frac{1}{t}\boldsymbol{b}\cdot \boldsymbol{m}}
 = (z)_{\infty}^t \sum_{\substack{m \geq 0 \\ m \equiv \ell \ (\text{mod}\, {t})}}\! p(m)z^{\frac{m}{t}}
=\frac{1}{t}\sum_{j=0}^{ t-1}{\rm e}^{-2\pi {\rm i} \frac{j\ell}{t}}\frac{(z)_{\infty}^t}{\big(z^{\frac{1}{t}}{\rm e}^{2\pi {\rm i} \frac{j}{t}}\big)_{\infty}}.
\end{align*}
It is in the latter form that $\Theta_{\ell,t}$ will appear in the proof of Theorem \ref{|w|>1 Asymptotics}. We will also make use of the version of Laplace's Method below that is sufficient for our purposes.
\begin{Theorem}[Laplace's method, see {\cite[Section 1.1.5]{Pinsky}}]\label{T:LaplaceMethod}
 Let $A,B\colon [a,b]\to \mathbb{C}$ be continuous functions. Let $x_0 \in (a,b)$ and suppose $\mathrm{Re}(B(x))<\mathrm{Re}(B(x_0))$ for all $x\in (a,b)$ with $x \neq x_0$. Suppose further that
 \begin{align*}
 \lim_{x\to x_0}\frac{B(x)-B(x_0)}{(x-x_0)^2} = -k\in \mathbb{C},
 \end{align*}
 with $\mathrm{Re}(k)>0.$ Then as $t \to \infty$,
 \begin{align*}
 \int^b_{a}A(x){\rm e}^{tB(x)}{\rm d}x = {\rm e}^{tB(x_0)}\biggl(A(x_0)\sqrt{\frac{\pi}{tk}}+o\biggl(\frac{1}{\sqrt{t}}\biggr)\biggr).
 \end{align*}
\end{Theorem}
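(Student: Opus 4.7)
The plan is to split the integration interval into a small neighborhood $J_\delta := (x_0 - \delta, x_0 + \delta)$ of $x_0$ and its complement $K_\delta := [a,b] \setminus J_\delta$, bound the contribution from $K_\delta$ crudely, and treat the contribution from $J_\delta$ via the change of variables $u = \sqrt{t}(x - x_0)$ to extract a Gaussian integral $\int_{\R} {\rm e}^{-ku^2}\,{\rm d}u = \sqrt{\pi/k}$. First, I fix $\delta > 0$ small. Writing $B(x) - B(x_0) = -k(x - x_0)^2 + \varepsilon(x)(x - x_0)^2$, the hypothesis forces $\varepsilon(x) \to 0$ as $x \to x_0$, so I can choose $\delta$ small enough that $|\varepsilon(x)| \leq \mathrm{Re}(k)/2$ on $J_\delta$; this yields the uniform real-part bound $\mathrm{Re}(B(x) - B(x_0)) \leq -\tfrac{\mathrm{Re}(k)}{2}(x-x_0)^2$ on $J_\delta$.

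Next I dispose of the contribution from $K_\delta$. Since $\mathrm{Re}(B)$ is continuous on the compact set $K_\delta$ and strictly less than $\mathrm{Re}(B(x_0))$ at every point there, there exists $\eta = \eta(\delta) > 0$ with $\mathrm{Re}(B(x)) \leq \mathrm{Re}(B(x_0)) - \eta$ on $K_\delta$. Together with the boundedness of $A$, this gives
\[
\left|\int_{K_\delta} A(x) {\rm e}^{tB(x)}\,{\rm d}x\right| \leq (b-a)\|A\|_\infty\, {\rm e}^{t\,\mathrm{Re}(B(x_0))}\, {\rm e}^{-t\eta},
\]
which in modulus is $o\bigl({\rm e}^{tB(x_0)}/\sqrt{t}\bigr)$ as $t \to \infty$, i.e., absorbed into the error term.

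I then treat the main contribution on $J_\delta$. Substituting $u = \sqrt{t}(x - x_0)$ yields
\[
\int_{J_\delta} A(x){\rm e}^{tB(x)}\,{\rm d}x = \frac{{\rm e}^{tB(x_0)}}{\sqrt{t}} \int_{-\delta\sqrt{t}}^{\delta\sqrt{t}} A\!\lp x_0 + \tfrac{u}{\sqrt{t}}\rp {\rm e}^{-ku^2 + u^2 \varepsilon(x_0 + u/\sqrt{t})}\,{\rm d}u.
\]
By the choice of $\delta$, the modulus of the integrand (extended by zero outside $[-\delta\sqrt{t},\delta\sqrt{t}]$) is dominated by the integrable function $\|A\|_\infty\, {\rm e}^{-\mathrm{Re}(k) u^2/2}$ on $\R$. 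As $t \to \infty$, the integrand converges pointwise to $A(x_0){\rm e}^{-ku^2}$ (using continuity of $A$ and $\varepsilon(x_0) = 0$). Dominated convergence then gives the limit $A(x_0)\int_{\R} {\rm e}^{-ku^2}\,{\rm d}u = A(x_0)\sqrt{\pi/k}$, where the Gaussian evaluation for complex $k$ with $\mathrm{Re}(k) > 0$ is obtained by rotating the contour to the steepest-descent ray $u \in \sqrt{k}^{-1}\R$ (with the principal branch $\mathrm{Re}(\sqrt{k}) > 0$) and applying the real Gaussian integral. Combining with the tail bound gives the stated asymptotic.

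The main technical care is the complex-variable subtlety: one must exploit both $\mathrm{Re}(k) > 0$ and $\mathrm{Re}(B(x)) < \mathrm{Re}(B(x_0))$ simultaneously to produce integrable Gaussian dominants on $J_\delta$ and exponential decay on $K_\delta$, and to fix the correct branch of $\sqrt{\pi/k}$. Once the real-part bound on $J_\delta$ is secured by the right choice of $\delta$, the remaining steps — dominated convergence and the Gaussian evaluation — are standard.
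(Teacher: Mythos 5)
The paper does not prove this statement at all; it is imported from Pinsky's book and used as a black box, so there is no internal proof to compare against. Your argument is the standard one (exponentially small tail off a fixed neighborhood $J_\delta$ of $x_0$, the substitution $u=\sqrt{t}(x-x_0)$, and dominated convergence against a Gaussian majorant), and it is essentially correct. In particular you handle the two points that genuinely require care: securing the uniform bound $\mathrm{Re}(B(x)-B(x_0))\leq -\tfrac{1}{2}\mathrm{Re}(k)(x-x_0)^2$ on $J_\delta$ \emph{before} passing to the limit, so that $\|A\|_\infty {\rm e}^{-\mathrm{Re}(k)u^2/2}$ is a legitimate dominant, and evaluating $\int_{\mathbb{R}}{\rm e}^{-ku^2}\,{\rm d}u=\sqrt{\pi/k}$ for complex $k$ with $\mathrm{Re}(k)>0$ on the principal branch.

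One point deserves attention. The hypothesis as stated gives $\mathrm{Re}(B(x))<\mathrm{Re}(B(x_0))$ only for $x$ in the \emph{open} interval $(a,b)$, so continuity yields only $\mathrm{Re}(B(a)),\mathrm{Re}(B(b))\leq \mathrm{Re}(B(x_0))$, and your compactness argument producing $\eta(\delta)>0$ on $K_\delta=[a,b]\setminus J_\delta$ silently assumes strict inequality at the endpoints as well. This is not purely cosmetic: if, say, $\mathrm{Re}(B(a))=\mathrm{Re}(B(x_0))$ with $B(x)-B(x_0)\asymp -(x-a)^{m}$ near $a$ for some $m>2$ (which is compatible with all the stated hypotheses), the endpoint contributes on the order of $|{\rm e}^{tB(x_0)}|\,t^{-1/m}$, which swamps the claimed $t^{-1/2}$ main term; so the theorem is actually false under the literal hypothesis. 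The intended hypothesis---and the one satisfied in the paper's application, where $\mathrm{Re}(B(\theta))=\frac{1}{1+\theta^2}$ on an interval whose endpoints stay bounded away from $0$---is that $x_0$ is the strict maximizer of $\mathrm{Re}(B)$ on the closed interval $[a,b]$. With that reading your proof is complete.
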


We are now ready to prove Theorem \ref{|w|>1 Asymptotics}.

\begin{proof}[Proof of Theorem \ref{|w|>1 Asymptotics}] If $|w|>1$, then $P_t(w,q)$ is analytic in the region
 \begin{align*}
 \bigl|wq^t\bigr|<1 \ \iff\  |q|< |w|^{-\frac{1}{t}}<1.
 \end{align*}
 Now, let \smash{$x_n:= \frac{\pi}{\sqrt{6n}} \to 0^+$}. Choose a $t$-th root of $w$. Then
 \begin{align*}
 P_t(w,n)&{}=\frac{1}{2\pi {\rm i}} \int_{\substack{q=w^{-\frac{1}{t}}{\rm e}^{-x_n+2\pi {\rm i} \theta} \\ |\theta|\leq \frac{1}{2}}} \frac{H_t(w;q)}{q^{n+1}} \,{\rm d}q \\
 &{}={\rm e}^{nx_n}w^{\frac{n}{t}}\int_{|\theta|\leq \frac{1}{2}} H_t\big(w;w^{-\frac{1}{t}}{\rm e}^{-x_n+2\pi {\rm i} \theta}\big){\rm e}^{-2\pi {\rm i} n \theta} {\rm d}\theta \\
 &{}={\rm e}^{nx_n}w^{\frac{n}{t}}\int_{|\theta|\leq \frac{1}{2}} \big({\rm e}^{-tx_n+2\pi {\rm i} t \theta}\big)_{\infty}^{-t}\frac{\big(w^{-1}{\rm e}^{-tx_n+2\pi {\rm i} t \theta}\big)_{\infty}^t}{\big(w^{-\frac{1}{t}}{\rm e}^{-x_n+2\pi {\rm i} \theta}\big)_{\infty}}{\rm e}^{-2\pi {\rm i} n \theta} {\rm d}\theta.
 \end{align*}
 Now set \smash{$\theta\mapsto \frac{1}{t}(\theta+j)$} for $0\leq j \leq t-1$. Then we obtain
 \begin{align*}
 P_t(w,n)
 =\frac{{\rm e}^{nx_n}w^{\frac{n}{t}}}{t}\sum_{j=0}^{ t-1}{\rm e}^{-2\pi {\rm i} \frac{jn}{t}}\int_{|\theta|\leq \frac{1}{2}} \big({\rm e}^{-tx_n+2\pi {\rm i} \theta}\big)_{\infty}^{-t}\frac{\big(w^{-1}{\rm e}^{-tx_n+2\pi {\rm i} \theta}\big)_{\infty}^t}{\big(w^{-\frac{1}{t}}{\rm e}^{2\pi {\rm i} \frac{j}{t}-x_n+2\pi {\rm i} \frac{\theta}{t}}\big)_{\infty}}{\rm e}^{-2\pi {\rm i} \frac{n}{t} \theta} {\rm d}\theta.
 \end{align*}
 Let $\mathcal{F}_N$ be the Farey sequence of order $N$ (to be chosen later in the proof) and $\theta_{h,k}'$, $\theta_{h,k}''$ the respective mediants of three consecutive fractions $\frac{h'}{k'},\frac{h}{k},\frac{h''}{k''} \in \mathcal{F}_N$. Then
 \begin{align*}
 P_t(w,n)
 &{}=\frac{{\rm e}^{nx_n}w^{\frac{n}{t}}}{t}\sum_{j=0}^{ t-1}{\rm e}^{-2\pi {\rm i} \frac{jn}{t}} \sum_{\frac{h}{k} \in \mathcal{F}_N} {\rm e}^{-2\pi {\rm i} \frac{hn}{kt}} \\
 &\hphantom{=\frac{{\rm e}^{nx_n}w^{\frac{n}{t}}}{t}\sum_{j=0}^{ t-1}}\,{} \times \int_{-\theta_{h,k}'}^{\theta_{h,k}''} \!\big({\rm e}^{-tx_n+2\pi {\rm i} \theta+2\pi {\rm i} \frac{h}{k}}\big)_{\infty}^{-t}\frac{\big(w^{-1}{\rm e}^{-tx_n+2\pi {\rm i} \theta+2\pi {\rm i} \frac{h}{k}}\big)_{\infty}^t}{(w^{-\frac{1}{t}}{\rm e}^{2\pi {\rm i} \frac{j}{t}-x_n+2\pi {\rm i} \frac{\theta}{t}+2\pi {\rm i} \frac{h}{kt}})_{\infty}}{\rm e}^{-2\pi {\rm i} \frac{n}{t} \theta} {\rm d}\theta.
 \end{align*}
 Since the middle factor of the integrand is analytic for $|q|=1$, we have, as $x_n \to 0^+$,
 \begin{align}
 \frac{\big(w^{-1}{\rm e}^{-tx_n+2\pi {\rm i} \theta+2\pi {\rm i} \frac{h}{k}}\big)_{\infty}^t}{\big(w^{-\frac{1}{t}}{\rm e}^{2\pi {\rm i} \frac{j}{t}-x_n+2\pi {\rm i} \frac{\theta}{t}+2\pi {\rm i} \frac{h}{kt}}\big)_{\infty}} =\frac{\big(w^{-1}{\rm e}^{2\pi {\rm i} \frac{h}{k}}\big)_{\infty}^t}{\big(w^{-\frac{1}{t}}{\rm e}^{2\pi {\rm i} \frac{j}{t}+2\pi {\rm i} \frac{h}{kt}}\big)_{\infty}}(1+x_n E_{j,h,k,t,n,w}(\theta)), \label{E:Ejhktw}
 \end{align}
 where $E_{j,h,k,t,n,w}(\theta)$ is a continuous function of $\theta$, uniformly bounded with respect to~$j$,~$h$,~$k$,~$n$ and for $|w|\geq 1+\epsilon$. Set $N:=\lfloor \sqrt{n} \rfloor$.
 Here, $\theta_{h,k}',\theta_{h,k}'' \asymp \frac{1}{kN} \to 0$ \cite[p.~75]{Andrews}. Thus, using the continuity of the above function of $\theta$,
 \begin{align*}
 P_t(w,n)
 &{}=\frac{{\rm e}^{nx_n}w^{\frac{n}{t}}}{t}\sum_{j=0}^{ t-1}{\rm e}^{-2\pi {\rm i} \frac{jn}{t}} \sum_{\frac{h}{k} \in \mathcal{F}_N} {\rm e}^{-2\pi {\rm i} \frac{hn}{kt}}\frac{\big(w^{-1}{\rm e}^{2\pi {\rm i} \frac{h}{k}}\big)_{\infty}^t}{\big(w^{-\frac{1}{t}}{\rm e}^{2\pi {\rm i} \frac{j}{t}+2\pi {\rm i} \frac{h}{kt}}\big)_{\infty}} \\[-1mm]
 &\hphantom{=\frac{{\rm e}^{nx_n}w^{\frac{n}{t}}}{t}\sum_{j=0}^{ t-1}}\,{} \times \int_{-\theta_{h,k}'}^{\theta_{h,k}''} \big({\rm e}^{-tx_n+2\pi {\rm i} \theta+2\pi {\rm i} \frac{h}{k}}\big)_{\infty}^{-t}{\rm e}^{-2\pi {\rm i} \frac{n}{t} \theta}(1+x_n E_{j,h,k,n,t,w}(\theta)) \,{\rm d}\theta \\[-1mm]
 &{}={\rm e}^{nx_n}w^{\frac{n}{t}} \sum_{\frac{h}{k} \in \mathcal{F}_N} {\rm e}^{-2\pi {\rm i} \frac{hn}{kt}}\Theta_{n,t}\big(w^{-1}{\rm e}^{2\pi {\rm i} \frac{h}{k}}\big) \\[-1mm]
 &\hphantom{={\rm e}^{nx_n}w^{\frac{n}{t}} \sum_{\frac{h}{k} \in \mathcal{F}_N}}\,{} \times \int_{-\theta_{h,k}'}^{\theta_{h,k}''} \big({\rm e}^{-tx_n+2\pi {\rm i} \theta+2\pi {\rm i} \frac{h}{k}}\big)_{\infty}^{-t}{\rm e}^{-2\pi {\rm i} \frac{n}{t} \theta}(1+x_n E_{j,h,k,n,t,w}(\theta)) \,{\rm d}\theta.
 \end{align*}

 Note that $\Theta_{n,t}\big(w^{-1}{\rm e}^{2\pi {\rm i} \frac{h}{k}}\big)=\Theta_{\ell,t}\big(w^{-1}{\rm e}^{2\pi {\rm i} \frac{h}{k}}\big)$ is uniformly bounded for all $\frac{h}{k} \in \mathcal{F}_N$ and all $|w|>1+\epsilon$, so does not contribute to asymptotic growth. Now, by \cite[equation~(5.2.2)]{Andrews}, with
 \begin{align*}
 2\pi {\rm i} \frac{h+{\rm i}z}{k}=-tx_n+2\pi {\rm i} \theta+2\pi {\rm i} \frac{h}{k} \ \implies\  z=\frac{ktx_n}{2\pi}-k{\rm i}\theta,
 \end{align*}
 and $hh' \equiv -1 \pmod{k}$ and $\omega_{h,k}={\rm e}^{\pi {\rm i} s(h,k)}$, where $s(h,k)$ is the Dedekind sum as in \cite[equation~(5.2.4)]{Andrews},
\begin{align*}
 &\big({\rm e}^{-tx_n+2\pi {\rm i} \theta+2\pi {\rm i} \frac{h}{k}}\big)_{\infty}^{-t} \\[-1mm]
 &{}= \omega_{h,k}^t \biggl(\frac{ktx_n}{2\pi}-k{\rm i}\theta\biggr)^{\frac{t}{2}}{\rm e}^{\frac{\pi t}{12k(\frac{ktx_n}{2\pi}-k{\rm i}\theta)}-\pi t\frac{\frac{ktx_n}{2\pi}-k{\rm i}\theta}{12k}}\Biggl(1+O\Biggl( {\rm e}^{2\pi {\rm i}\frac{h'+\frac{{\rm i}}{\frac{ktx_n}{2\pi}-k{\rm i}\theta}}{k}}\Biggr) \Biggr) \\[-1mm]
 &{}= \omega_{h,k}^t k^{\frac{t}{2}} \biggl(\frac{tx_n}{2\pi}-{\rm i}\theta\biggr)^{\frac{t}{2}}{\rm e}^{\frac{\pi t}{12}\big(\frac{1}{k^2(\frac{tx_n}{2\pi}-{\rm i}\theta)}-(\frac{tx_n}{2\pi}-{\rm i}\theta)\big)}\Biggl(1+\Biggl( {\rm e}^{- \frac{2\pi}{1+(\frac{2\pi\theta}{tx_n})^2}}\Biggr) \Biggr) \\[-1mm]
 &{}= \omega_{h,k}^t k^{\frac{t}{2}} \biggl(\frac{tx_n}{2\pi}-{\rm i}\theta\biggr)^{\frac{t}{2}}{\rm e}^{\frac{\pi t}{12k^2(\frac{tx_n}{2\pi}-{\rm i}\theta)}}(1+O(x_n)) \biggl(1+O\biggl(\frac{1}{kN}\biggr) \biggr) \Biggl(1+O\Biggl( \frac{1}{1+\frac{1}{(tx_nkN)^2}}\Biggr) \Biggr).
\end{align*}
Thus,
 \begin{align}
 P_t(w,n) &{}={\rm e}^{nx_n}w^{\frac{n}{t}} \sum_{\frac{h}{k} \in \mathcal{F}_{\lfloor \sqrt{n} \rfloor}} \omega_{h,k}^tk^{\frac{t}{2}}{\rm e}^{-2\pi {\rm i} \frac{hn}{kt}}\Theta_{\ell,t}\big(w^{-1}{\rm e}^{2\pi {\rm i} \frac{h}{k}}\big)(1+o(1)) \label{E:CircleMethodSeries}\\[-1mm]
 &\hphantom{={\rm e}^{nx_n}w^{\frac{n}{t}} \sum_{\frac{h}{k} \in \mathcal{F}_{\lfloor \sqrt{n} \rfloor}}}{}
 \times \int_{-\theta_{h,k}'}^{\theta_{h,k}''} \! \biggl(\frac{tx_n}{2\pi}-{\rm i}\theta\biggr)^{\frac{t}{2}}\!{\rm e}^{\frac{\pi t}{12k^2(\frac{tx_n}{2\pi}-{\rm i}\theta)}-2\pi {\rm i} \frac{n}{t} \theta}\! (1+x_n E_{j,h,k,t,n,w}(\theta))\, {\rm d}\theta, \nonumber
 \end{align}
and we are left to bound the integrals. Choose $N:=\lfloor \sqrt{n} \rfloor$, so that for $k \geq 2$,{\samepage
\begin{align*}
 & \biggl|
 \int_{-\theta_{h,k}'}^{\theta_{h,k}''} \biggl(\frac{tx_n}{2\pi}-{\rm i}\theta\biggr)^{\frac{t}{2}}{\rm e}^{\frac{\pi t}{12k^2(\frac{tx_n}{2\pi}-{\rm i}\theta)}-2\pi {\rm i} \frac{n}{t} \theta} (1+x_n E_{j,h,k,n,t,w}(\theta)) \,{\rm d}\theta \biggr| \\[-1mm]
 &\qquad{} \ll \int_{-\theta_{h,k}'}^{\theta_{h,k}''} \biggl|\frac{tx_n}{2\pi}-{\rm i}\theta \biggr|^{\frac{t}{2}} {\rm e}^{\frac{\pi t \cdot \frac{tx_n}{2\pi}}{12k^2(\frac{t^2x_n^2}{4\pi^2}+\theta^2)}} {\rm d}\theta
 \ll \frac{1}{kN}{\rm e}^{\frac{\pi^2}{6k^2x_n}}
 =o\Bigl({\rm e}^{\frac{\pi^2}{6x_n}}\Bigr),
\end{align*}
where we used the boundedness of $E_{j,h,k,n,t,w}(\theta)$.}

For $k=1$, we have $\theta_{0,1}'=\theta_{0,1}''=\frac{1}{N+1}$ and we apply Theorem \ref{T:LaplaceMethod}.
We first need to transform the integral to be of the correct form. We first set $\theta \mapsto \theta \frac{tx_n}{2\pi}$. Thus,
\begin{align}
 &\int_{-\frac{1}{N+1}}^{\frac{1}{N+1}} \biggl(\frac{tx_n}{2\pi}-{\rm i}\theta\biggr)^{\frac{t}{2}}{\rm e}^{\frac{\pi t}{12(\frac{tx_n}{2\pi}-{\rm i}\theta)}-2\pi {\rm i} \frac{n}{t} \theta} (1+x_n E_{j,0,1,n,t,w}(\theta)) \,{\rm d}\theta \nonumber \\
 &\qquad{}=\frac{tx_n}{2\pi}\int_{-\frac{2\pi}{tx_n(N+1)}}^{\frac{2\pi}{tx_n(N+1)}} \biggl(\frac{tx_n}{2\pi}-\frac{{\rm i}tx_n\theta}{2\pi}\biggr)^{\frac{t}{2}}{\rm e}^{\frac{\pi}{12(\frac{x_n}{2\pi}-\frac{{\rm i}x_n\theta}{2\pi})}- {\rm i} nx_n \theta} \biggl(1+x_n E_{j,0,1,n,t,w}\biggl(\frac{tx_n\theta}{2\pi}\biggr)\biggr) {\rm d}\theta \nonumber \\
 &\qquad{}=\frac{t^{\frac{t+2}{2}}x_n^{\frac{t+2}{2}}}{(2\pi)^{\frac{t+2}{2}}}\int_{-\frac{2\pi}{tx_n(N+1)}}^{\frac{2\pi}{tx_n(N+1)}} (1-{\rm i}\theta)^{\frac{t}{2}}{\rm e}^{\frac{\pi^2}{6x_n}( \frac{1}{1-{\rm i}\theta}- {\rm i} \theta)} \biggl(1+x_n E_{j,0,1,n,t,w}\biggl(\frac{tx_n\theta}{2\pi}\biggr)\biggr) {\rm d}\theta. \label{E:MainRewrite}
\end{align}
Now, as \smash{$x_n, \frac{1}{N+1} \asymp \frac{1}{\sqrt{n}}$}, it follows that the bounds of the integral above are bounded away from~0 and~$\infty$ and are of the form needed for Theorem \ref{T:LaplaceMethod}. We would like to apply Theorem \ref{T:LaplaceMethod} with $x\mapsto \theta$, $x_0\mapsto 0$, \smash{$A(\theta)=(1-{\rm i}\theta)^{\frac{1}{2}}$} and $B(\theta)=\frac{1}{1-{\rm i}\theta}- {\rm i} \theta$. Here,
\begin{align*}
 \mathrm{Re} \lp B(\theta) \rp = \frac{1}{1+\theta^2}< 1
\end{align*}
for $\theta \neq 0$ and Taylor expanding gives
\begin{align*}
 B(\theta)=1-\theta^2+O\big(\theta^3\big).
\end{align*}
First we bound the error term, using the boundedness of $E_{j,0,1,n,t,w}$, as
\begin{equation}\label{E:MainTerm}
\biggl|\int_{-\frac{2\pi}{tx_n(N+1)}}^{\frac{2\pi}{tx_n(N+1)}} (1-{\rm i}\theta)^{\frac{t}{2}}{\rm e}^{\frac{\pi^2}{6x_n}( \frac{1}{1-{\rm i}\theta}- {\rm i}\theta)} x_n E_{j,0,1,n,t,w}\biggl(\frac{tx_n\theta}{2\pi}\biggr) {\rm d}\theta \biggr| \ll {\rm e}^{\frac{\pi^2}{6x_n}}x_n.
\end{equation}
For the main term, Theorem \ref{T:LaplaceMethod} gives
\begin{equation}\label{E:MainError}
\int_{-\frac{2\pi}{tx_n(N+1)}}^{\frac{2\pi}{tx_n(N+1)}} (1-{\rm i}\theta)^{\frac{t}{2}}{\rm e}^{\frac{\pi^2}{6x_n}( \frac{1}{1-{\rm i}\theta}- {\rm i}\theta)}={\rm e}^{\frac{\pi^2}{6x_n}}\Biggl(\sqrt{\frac{\pi6x_n}{\pi^2}}+o\big( \sqrt{x_n}\big)\Biggr),
\end{equation}
hence \eqref{E:MainRewrite}--\eqref{E:MainError} imply
\begin{align}
 \int_{-\frac{1}{N+1}}^{\frac{1}{N+1}} \biggl(\frac{tx_n}{2\pi}-{\rm i}\theta\biggr)^{\frac{t}{2}}{\rm e}^{\frac{\pi t}{12(\frac{tx_n}{2\pi}-{\rm i}\theta)}-2\pi {\rm i} \frac{n}{t} \theta} {\rm d}\theta
 &{}=\frac{t^{\frac{2+t}{2}}x_n^{\frac{2+t}{2}}}{(2\pi)^{\frac{2+t}{2}}}{\rm e}^{\frac{\pi^2}{6x_n}}\Biggl(\sqrt{\frac{\pi6x_n}{\pi^2}}+o( 1) \Biggr) \nonumber \\
 &{}=\frac{1}{2^{\frac{5+t}{4}}3^{\frac{1+t}{4}}\pi^{\frac{3+t}{2}}n^{\frac{3+t}{4}}}{\rm e}^{\frac{\pi\sqrt{n}}{\sqrt{6}}}(1+o(1)). \label{E:wlargemainterm}
\end{align}
Since we assume that $w^{-1}$ is not in an $\epsilon$-neighborhood of $\mathcal{Z}_{\ell,t}(\epsilon)$, it follows that the main term in \eqref{E:CircleMethodSeries} does not vanish and occurs for $k=1$. Combining with \eqref{E:wlargemainterm} and using $\omega_{0,1}=1$, the theorem follows.
\end{proof}

\subsection[The case |w|<1]{The case $\boldsymbol{|w|<1}$} \label{S:smallw}

Let $\omega_{h,k}$ be as in the proof of Theorem \ref{|w|<1 Asymptotics}. Given $t \in \mathbb{N}$, following \cite[Theorem 5]{Anderson}, let
\begin{align*}
 \omega_{h,k}':=\frac{\omega_{h,k}}{\omega_{\frac{th}{(t,k)},\frac{k}{(t,k)}}^t}.
\end{align*}
We require the following twist of the function $A_t(n)$ from \cite{Anderson}:
\begin{align} \label{A Definition}
 A_t(w,n):=\sum_{\substack{k \geq 1 \\ \gcd(k,t)=1}} \frac{1}{k^{\frac{t-1}{2}}}\sum_{\substack{0\leq h < k \\ \gcd(h,k)=1}}{\rm e}^{-2\pi {\rm i} \frac{hn}{k}}\omega'_{h,k}\cdot \big(w{\rm e}^{2\pi {\rm i} \frac{ht}{k}};w{\rm e}^{2\pi {\rm i} \frac{ht}{k}}\big)_{\infty}^{-t}.
\end{align} Note that $A_t(0,n)=A_t(n)$.
 This series converges absolutely and uniformly in $w$ for $|w|$ sufficiently small; indeed, we can trivially bound the series by $(|w|)_{\infty}^{-t}\zeta\big(\frac{3}{2}\big)$. Therefore, we have $\lim_{w \to 0} A_t(w,n)=A_t(n)$, and it follows by the following proposition that $A_t(w,n) \neq 0$ for $|w|$ sufficiently small.

\begin{Proposition}[{\cite[Proposition 6]{Anderson}}] If $t \geq 6$, then $A_t(n)$ is a nonzero real number such that
\begin{align*}
0.05 < A_t(n) < 2.62.
\end{align*}
\end{Proposition}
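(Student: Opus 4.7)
The plan is to isolate the $k=1$ term (which contributes exactly $1$) and to control everything else by a triangle-inequality bound that decays rapidly in $k$.

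First I would verify the $k=1$ contribution: the only admissible $h$ is $h=0$, and since the Dedekind sum $s(0,1)=0$ every $\omega$-factor that appears equals $1$, so the $k=1$ term is exactly $1$. Reality of $A_t(n)$ then follows from the involution $h \mapsto k-h$ on residues coprime to $k$. The standard identity $s(-h,k) = -s(h,k)$ (together with its analogue for the Dedekind sum indexing the denominator of $\omega'_{h,k}$) gives $\omega'_{k-h,k} = \overline{\omega'_{h,k}}$, while simultaneously $e^{-2\pi i (k-h) n/k} = \overline{e^{-2\pi i h n/k}}$. Hence paired summands are complex conjugates; the only potential self-paired case is $k=2$, $h=1$ (which occurs only when $\gcd(2,t)=1$), and this is harmless since $s(1,2)=0$.

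For the quantitative bounds, each $\omega_{h,k}$ has modulus one, so $|\omega'_{h,k}|=1$, and the triangle inequality yields
\[
|A_t(n) - 1| \;\leq\; \sum_{\substack{k \geq 2 \\ \gcd(k,t) = 1}} \frac{\phi(k)}{k^{(t-1)/2}}.
\]
The binding case is $t = 6$, where the exponent is $5/2$ and the smallest admissible $k$ is $5$. I would split the right-hand side into an explicit head $k \leq K$ and a tail $k > K$ bounded by comparison with $\int_K^\infty x^{-3/2}\, dx = 2/\sqrt{K}$. A cutoff like $K=25$ yields a head of roughly $0.2$ and a tail of at most $0.4$, comfortably below $0.95$. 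This forces $A_6(n) \in (0.05, 1.95)$, inside the claimed interval $(0.05,2.62)$. For every $t \geq 7$ the exponent $(t-1)/2 \geq 3$ makes the same majorizing series strictly smaller, so the same bounds hold a fortiori.

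The main obstacle is purely numerical: the margin between the crude bound $\sum_{k \geq 2,\, \gcd(k,6)=1} \phi(k)/k^{5/2}$ and the target $0.95$ is not enormous, so a careful head-plus-tail split (rather than one integral comparison from $k = 2$) is needed to close the gap for $t = 6$. Should the direct estimate prove too loose, a backup strategy is to extract additional cancellation from each inner sum $\sum_h e^{-2\pi i h n/k} \omega'_{h,k}$, which is a Kloosterman-type object and admits standard nontrivial bounds; but the straightforward estimate outlined above is expected to suffice.
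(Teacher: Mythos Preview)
The paper does not provide its own proof of this proposition; it is quoted verbatim from \cite[Proposition~6]{Anderson} and invoked as a black box to guarantee that $A_t(w,n)\neq 0$ for $|w|$ small. So there is nothing in the present paper to compare your argument against.

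On its own merits your outline is correct and is the standard approach. The $k=1$ term is indeed~$1$; the reality argument via $h\mapsto k-h$ is fine (and your identification of $k=2$, $h=1$ as the only possible self-paired term with $\gcd(h,k)=1$ is right). The head-plus-tail estimate for $t=6$ checks out numerically: with $K=25$ the head is about $0.20$ and the tail, bounded by $\sum_{k>25}k^{-3/2}\le 2/\sqrt{25}=0.4$, gives $|A_6(n)-1|<0.95$ as you claim. One point to tighten: for $t\geq 7$ the phrase ``the same majorizing series'' is not literally accurate, since the coprimality condition $\gcd(k,t)=1$ changes with $t$ and for odd $t$ now admits $k=2,3,4,\dots$. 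The fix is immediate: drop the gcd constraint altogether and bound by
\[
\sum_{\substack{k\geq 2 \\ \gcd(k,t)=1}}\frac{\phi(k)}{k^{(t-1)/2}}\;\leq\;\sum_{k\geq 2}\frac{\phi(k)}{k^{3}}\;=\;\frac{\zeta(2)}{\zeta(3)}-1\;\approx\;0.37,
\]
which is well under $0.95$. With that small adjustment your proof is complete, and in fact yields the sharper range $(0.05,1.95)\subset(0.05,2.62)$.
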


Our proof of Theorem \ref{|w|<1 Asymptotics} closely follows the proof in \cite{Anderson} of an asymptotic formula for the number of $t$-core partitions in \cite{Anderson}. For further developments in these asymptotics, see \cite{LP, Tyler}.

\begin{proof}[Proof of Theorem \ref{|w|<1 Asymptotics}]

In this case, $H_t(w,q)$ is analytic in the region $|q|<1$, since also ${\bigl|wq^t\bigr|<1}$. We closely follow \cite{Anderson}. Exactly as in \cite[p.~2598]{Anderson}, we have
\begin{align*}
P_t(w;n) &={\rm i}\sum_{\substack{0\leq h < k \leq N \\ \gcd(h,k)=1}}{\rm e}^{-2\pi {\rm i} \frac{hn}{k}}\int_{N^{-2}+{\rm i}\theta_{h,k}'}^{N^{-2}-{\rm i}\theta_{h,k}''}H_t\biggl(w;\exp\biggl( \frac{2\pi {\rm i} h}{k}-2\pi z\biggr)\biggr) {\rm e}^{2\pi {\rm i} n z} {\rm d}z \\
&={\rm i}\sum_{\substack{0\leq h < k \leq N \\ \gcd(h,k)=1}}{\rm e}^{-2\pi {\rm i} \frac{hn}{k}}\int_{N^{-2}+{\rm i}\theta_{h,k}'}^{N^{-2}-{\rm i}\theta_{h,k}''}\bigl(w {\rm e}^{\frac{2\pi {\rm i} h}{k}-2\pi z}\bigr)_{\infty}^{-t}F_t\biggl(\exp\biggl( \frac{2\pi {\rm i} h}{k}-2\pi z\biggr)\biggr) {\rm e}^{2\pi {\rm i} n z}{\rm d}z.
\end{align*} Set $N:=\lfloor \sqrt{n} \rfloor$.
 Here, $\theta_{h,k}',\theta_{h,k}'' \asymp \frac{1}{kN} \to 0$ \cite[p.~75]{Andrews}. Thus, ${\rm e}^{\frac{2\pi {\rm i} h}{k}-2\pi z} \to {\rm e}^{\frac{2\pi {\rm i} h}{k}}$ for $z$ in the range of integration. Hence, we have
 \begin{align*}
P_t(w;n)&{}={\rm i}\sum_{\substack{0\leq h < k \leq N \\ \gcd(h,k)=1}}{\rm e}^{-2\pi {\rm i} \frac{hn}{k}}\big(w{\rm e}^{\frac{2\pi {\rm i} h}{k}}\big)_{\infty}^{-t} \\
 & \hphantom{={\rm i}\sum_{\substack{0\leq h < k \leq N \\ \gcd(h,k)=1}}}\,{} \times \int_{N^{-2}+{\rm i}\theta_{h,k}'}^{N^{-2}-{\rm i}\theta_{h,k}''}F_t\biggl(\exp\biggl( \frac{2\pi {\rm i} h}{k}-2\pi z\biggr)\biggr) {\rm e}^{2\pi {\rm i} n z} (1+zE_{h,k,n,t,w}(z)) \,{\rm d}z,
\end{align*}
 where, as in \eqref{E:Ejhktw}, $E_{h,k,n,t,w}(z)$ is a continuous function of $z$, uniformly bounded with respect to~$h$,~$k$ and~$n$ for $|w|\leq w_0$; since $A_t(w,n) \neq 0$ and $z \to 0$, the contribution from this function goes into the error term. The rest of the proof follows \cite{Anderson}; after a transformation to $F_t$, we have
 \begin{align*}
 P_t(w,n)=M+E_1+E_2,
 \end{align*}
 where $M_1$, $E_1$, $E_2$ are as in \cite[p.~2599]{Anderson} with the extra factor \smash{$\big(w{\rm e}^{\frac{2\pi {\rm i} h}{k}}\big)_{\infty}^{-t}$} in the outer sum. Since this factor is uniformly bounded and $E_1$ and $E_2$ are bounded term-wise by inserting absolute values, we can bound $E_1$ and $E_2$ in our case in the same way and get
 \begin{align*}
|E_1|,|E_2|=O\big(n^{\frac{t-1}{4}}\big).
 \end{align*}
 The integral in $M$ is evaluated as in \cite[pp.~2602--2603]{Anderson} and completed to a full sum, from~${k \leq N}$ to~${k \geq 1}$, as in \cite[p.~2603, middle]{Anderson}, again using the uniform boundedness of \smash{\raisebox{-0.5pt}{$\big(w{\rm e}^{\frac{2\pi {\rm i} h}{k}}\big)_{\infty}^{-t}$}}.
\end{proof}

\section{Other polynomials} \label{S: Conjectures}

Following the introduction, we pose as a research area the detailed study of the zeros of partition polynomials. These can come from natural two-variable generalizations of $q$-series arising in partition theory, as well as those two-variable series which track natural statistics on partitions (of course, often these two situations coincide). To give an idea of the richness of the potential of this area of study, we give a couple examples where the zero attractors appear to give very different behaviors.

\subsection{Polynomials from products}

As previously mentioned, an important paper of Parry \cite{Parry} has computed asymptotics for many products in the case $|w|<1$; more specifically, Parry's results deal with very general products of the form
\begin{align*}
 \prod_{m \geq 1} \dfrac{1}{\lp 1 - w q^m \rp^{a_m}}.
\end{align*}
Using the strategy of Meinardus (see \cite[Section 6]{Andrews} and \cite{Meinardus}) suitably modified to permit the introduction of a second variable, Parry shows how analytic properties of the Dirichlet series
\begin{align*}
 \sum_{m \geq 1} \dfrac{{\rm e}^{\frac{2\pi {\rm i} h m}{k}} a_m}{m^s}
\end{align*}
produce asymptotic estimates for the coefficients of the related generating function in the case~${|w|<1}$. In such cases, the authors expect that asymptotics for $|w|>1$ follow using methods very much like those we have used, namely the saddle point method or circle method. It would be interesting to determine which kinds of zero attractors arise from natural partition generating functions that arise as products, including natural families of eta quotients and certain restricted partition functions such as partitions into powers (which are not connected to modular forms), the generating function $\lp -wq;q \rp_\infty$ which tracks the parts of partitions into distinct parts, or the D'Arcais polynomials which arise from the product $(q;q)_{\infty}^{-w}$ \cite{Han,HN19,HN21,HN23,HNW18,Kostant}.

\subsection{Polynomials from Rogers--Ramanujan series}

The Rogers--Ramanujan identities
\begin{align*}
 \sum_{n \geq 0} \dfrac{q^{n^2}}{\lp q;q \rp_n} = \dfrac{1}{\big( q; q^5 \big)_\infty \big( q^4; q^5 \big)_\infty} \qquad \text{and} \qquad \sum_{n \geq 0} \dfrac{q^{n^2 + n}}{\lp q; q \rp_n} = \dfrac{1}{\big( q^2; q^5 \big)_\infty \big( q^3; q^5 \big)_\infty}
\end{align*}
are among the central results in partition theory and in $q$-series, with far-reaching applications in many fields (see, for instance, \cite{Sills}). Zero attractors for coefficient polynomials associated to natural $w$-generalizations of the product sides of these identities (i.e., those that track the number of parts) can be analyzed using the framework of Parry discussed in the previous section. It would be natural to ask similar questions about the natural $w$-generalizations of the $q$-hypergeometric sides (which we note are distinct from the $w$-generalizations of the products). For instance, what are the behaviours of the polynomials arising from
\begin{align*}
 \sum_{n \geq 0} \dfrac{q^{n^2} w^n}{\lp q;q \rp_n} \qquad \text{ or } \qquad \sum_{n \geq 0} \dfrac{q^{n^2 + n} w^n}{\lp q;q \rp_n}?
\end{align*}
We also note that such functions have appeared already in physics \cite{Zhu}, and bilateral versions of these series were studied in \cite{Schlosser}. More broadly, we ask about the polynomials~$ p_{a,b}(w;n)$ defined~by
\begin{align*}
 P_{a,b}(w;q):= \sum_{n \geq 0} p_{a,b}(w;n) q^n := \sum_{n \geq 0} \dfrac{q^{an^2 + bn} w^n}{\lp q;q \rp_n}.
\end{align*}
There are of course other ways that $w$ may be inserted into $q$-hypergeometric series, which we leave to future researchers. We propose the following problem.

\begin{Problem}
 Classify and establish as a function of $a$ and $b$ the zero attractors of polynomial families $\{ p_{a,b}(w;n) \}_{n \geq 0}$.
\end{Problem}

In data we computed, we observed several separate behaviors in the zeros (not necessarily exhaustive or comprehensive in scope), which we outline below in four loosely titled subsections. In particular, we offer a claim surrounding the Rogers--Ramanujan functions.

\subsubsection{Asymptotically real} To illustrate this behavior, we consider the examples with $(a,b)=(1,0)$ and $(a,b)=(1,1)$, which correspond to the sum-sides of the first and second Rogers--Ramanujan identities, respectively;~i.e.,
\begin{align*}
&p_{1,0}(w;n)=
 \sum_m p(n : m \text{ superdistinct parts}) w^m, \\
&p_{1,1}(w;n)=
 \sum_m p(n : m \text{ superdistinct parts, parts $\geq 2$}) w^m.
\end{align*} Recall that a partition has {\it superdistinct} parts, if the difference between parts is at least 2. For~${b\in \{0,1\}}$, and $n\in \mathbb N$, we define the Rogers--Ramanujan zero sets by
\begin{align*} Z_{1,b}(n):=\{w \in \mathbb C : p_{1,b}(w;n)=0\}.\end{align*} We first observe that all zeros in these Rogers--Ramanujan zero sets appear to lie in the half-plane $\operatorname{Re}(w)\leq 0$, with many on the negative real axis. Moreover, as $n$ increases, the zeros appear to asymptotically approach this axis.
\begin{Claim} Let $b\in \{0,1\}$ and $n\in \mathbb N.$
 If $w_{1,b}(n) \in Z_{1,b}(n)$ then $\operatorname{Re}(w_{1,b}(n)) \leq 0.$
Moreover, $\lim_{n\to\infty}\mathrm{Im}(w_{1,b}(n)) =0.$
\end{Claim}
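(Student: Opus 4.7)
The polynomials $p_{1,b}(w;n)$ have non-negative integer coefficients
\begin{align*}
[w^m]\,p_{1,b}(w;n)=p\big(n-m^2-bm \,\big|\,\text{parts}\leq m\big),
\end{align*}
so in particular they have no positive real roots. For the left-half-plane assertion, the natural route is to prove the stronger real-rootedness statement (all roots real and non-positive), consistent with numerical evidence through $n\leq 49$. The standard Rogers--Ramanujan $q$-difference relation
\begin{align*}
P_{1,b}(w,q)=P_{1,b+1}(w,q)+wq^{b+1}P_{1,b+2}(w,q)
\end{align*}
gives the coefficient recurrence $p_{1,b}(w;n)=p_{1,b+1}(w;n)+w\,p_{1,b+2}(w;n-b-1)$. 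Since $p_{1,b}(w;n)=0$ for $b\geq n\geq 1$ and a short calculation gives $p_{1,n-1}(w;n)=w$, one can induct downward on $b$ for fixed $n$, nested inside an outer strong induction on $n$. The plan is to prove simultaneously that $p_{1,b}(\cdot;n)$ is real-rooted with non-positive roots and that $p_{1,b+2}(\cdot;n-b-1)$ interlaces $p_{1,b+1}(\cdot;n)$. The inductive step then follows from the classical fact: if $f,g\in\R[w]$ are real-rooted with non-positive interlacing roots, then $f+wg$ is real-rooted with non-positive roots that interlace those of $f$ and $g$.

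For the second assertion, the strategy is to adapt the circle method of Section~\ref{S: Asymptotics} to the sum-type generating function $P_{1,b}(w,q)$. Starting from
\begin{align*}
p_{1,b}(w;n)=\frac{1}{2\pi{\rm i}}\oint\frac{P_{1,b}(w,q)}{q^{n+1}}\,{\rm d}q
\end{align*}
on a circle of radius $r_n\to 1^-$, one performs a saddle-point analysis near $q=1$, starting at $w=1$ where the Rogers--Ramanujan product identity supplies the usual modular input, and then treating the $w$-dependence perturbatively. The goal is an asymptotic
\begin{align*}
p_{1,b}(w;n)=C_b(w)\,n^{\alpha_b}\,{\rm e}^{\Phi_b(w)\sqrt{n}}(1+o(1))
\end{align*}
uniform in $w$ on compacta disjoint from the real axis, with $C_b(w)\neq 0$ there. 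Such an asymptotic precludes zeros off the real axis for sufficiently large $n$, which together with the first assertion forces $\mathrm{Im}(w_{1,b}(n))\to 0$. Note that if real-rootedness can be established in the first step, the asymptotic assertion becomes trivial.

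The principal obstacles are twofold. First, verifying the interlacing hypothesis throughout the two-parameter induction is delicate, since interlacing must be propagated compatibly with the recurrence across all $b$; should real-rootedness fail numerically for some $(b,n)$, one would need to weaken the conclusion and instead apply the Hermite--Biehler criterion for closed left-half-plane stability to the even--odd decomposition $p_{1,b}(w;n)=u_{n,b}(w^2)+w\,v_{n,b}(w^2)$. Second, unlike in the hook polynomial case treated in Section~\ref{S: Asymptotics}, the generating function $P_{1,b}(w,q)$ admits no infinite-product factorisation for general $w$, so the modular input to the saddle-point analysis is only directly available at $w=1$; establishing uniform saddle-point asymptotics at complex $w$, possibly by rewriting $P_{1,b}(w,q)$ via Bailey-pair or Rogers--Fine type transformations, is the central analytic challenge.
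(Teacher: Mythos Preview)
The paper does not prove this Claim at all. In Section~\ref{S: Conjectures} the authors explicitly write that they ``offer a claim surrounding the Rogers--Ramanujan functions'' and that ``all zeros in these Rogers--Ramanujan zero sets \emph{appear} to lie in the half-plane $\operatorname{Re}(w)\leq 0$'' and ``\emph{appear} to asymptotically approach this axis.'' The Claim is then followed only by numerical illustrations (Figure~\ref{fig_rp1}); no argument is given. In this paper the \texttt{Claim} environment is being used for a conjecture, not for a proven statement. So there is no proof in the paper against which to compare your proposal.

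That said, your proposal is a reasonable research programme, not a proof. You yourself flag the two genuine gaps. For the half-plane assertion, the downward induction on $b$ via $p_{1,b}(w;n)=p_{1,b+1}(w;n)+w\,p_{1,b+2}(w;n-b-1)$ requires that $p_{1,b+2}(\cdot;n-b-1)$ interlace $p_{1,b+1}(\cdot;n)$ at every step, and you have not established this; without it the ``classical fact'' you quote does not apply, and indeed one would need the roots of both summands to be interleaved in a compatible way, which is not automatic from the inductive hypothesis as stated. For the second assertion, the saddle-point analysis you describe is exactly the kind of thing the paper is inviting, but as you note the absence of a product form for general $w$ means the modular input is unavailable away from $w=1$, and ``treating the $w$-dependence perturbatively'' is a hope rather than a method. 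In short: you have correctly identified a plausible line of attack on an open problem posed by the paper, but what you have written is a strategy with acknowledged obstacles, not a proof.
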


We illustrate this phenomenon with $(a,b)=(1,0)$ (the first Rogers--Ramanujan function), with $n=1000$, $n=5000$, $n=10000$ in Figure \ref{fig_rp1}, which plots the zeros of $p_{1,0}(w;n)$ with restricted real parts for visibility. Plots for $(a,b)=(1,1)$ corresponding to the second Rogers--Ramanujan function are similar (and omitted for brevity). For reference, $p_{1,0}(w;1000)$ is of degree $31$ and the zero which is approximately $-4936.858637$ (not visible in Figure \ref{fig_rp1}) is the one of maximum modulus; $p_{1,0}(w;5000)$ is of degree $70$ and the zero which is approximately $-485377.8433$ is the one of maximum modulus; $p_{1,0}(w;10000)$ is of degree $100$ and the zero which is approximately $-3.644618597\times 10^{12}$ is the one of maximum modulus.

\begin{figure}[ht]
\centering
 \includegraphics[scale=.21]{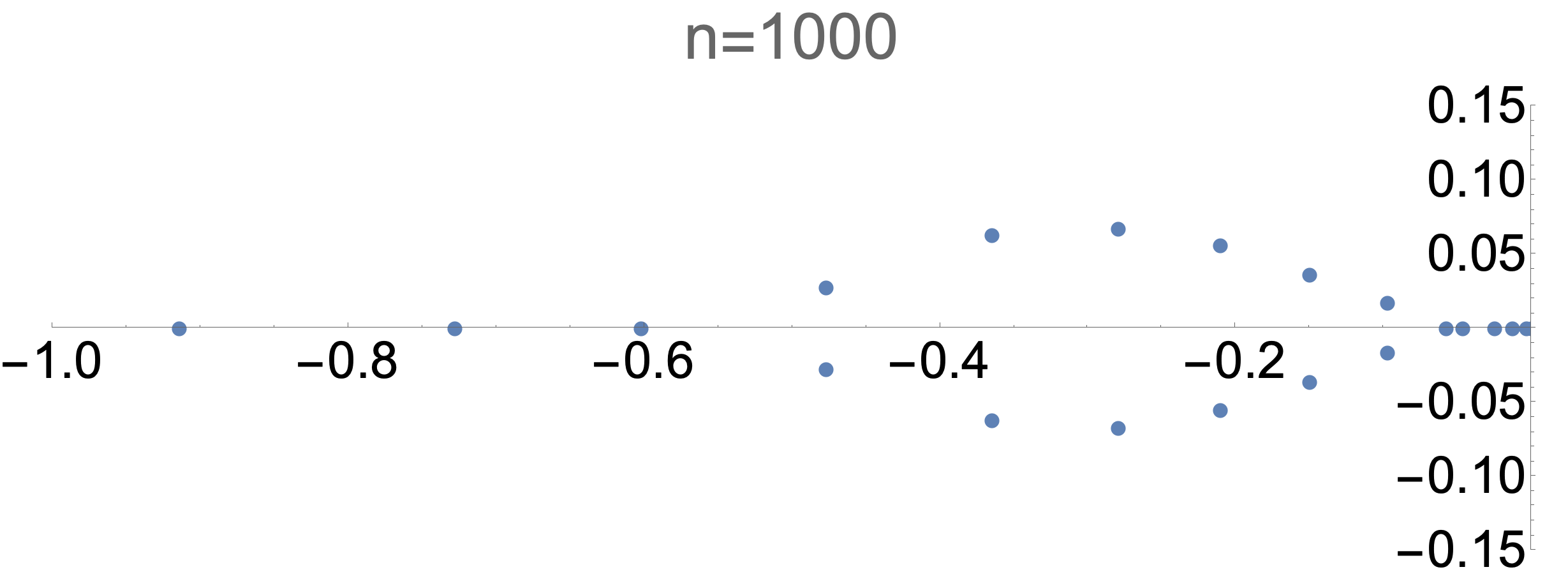} \medskip \\
\includegraphics[scale=.21]{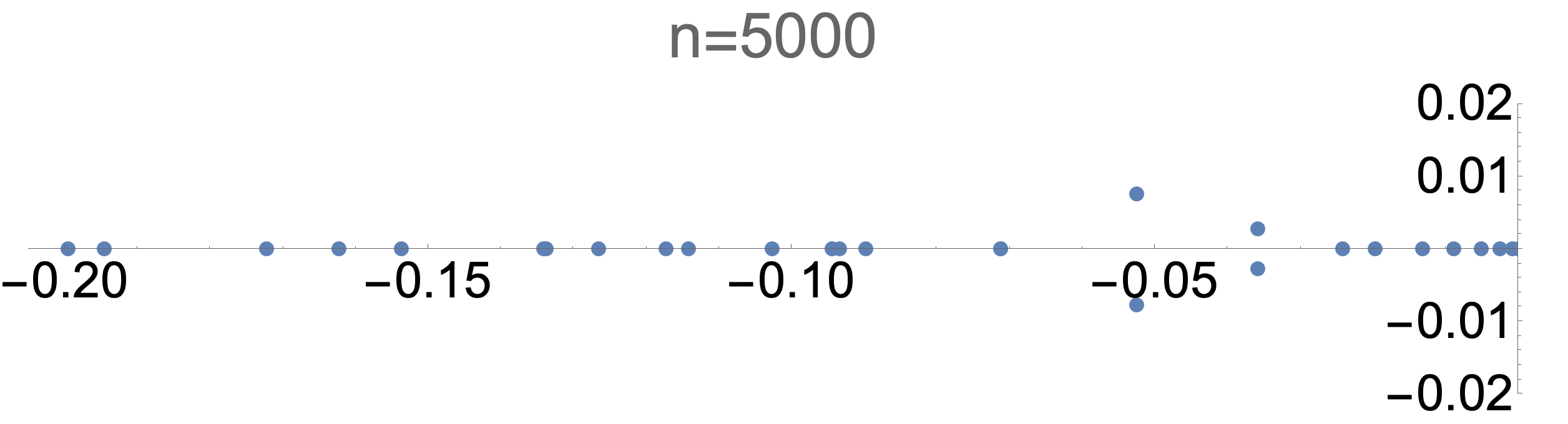} \medskip
 \\
\includegraphics[scale=.21]{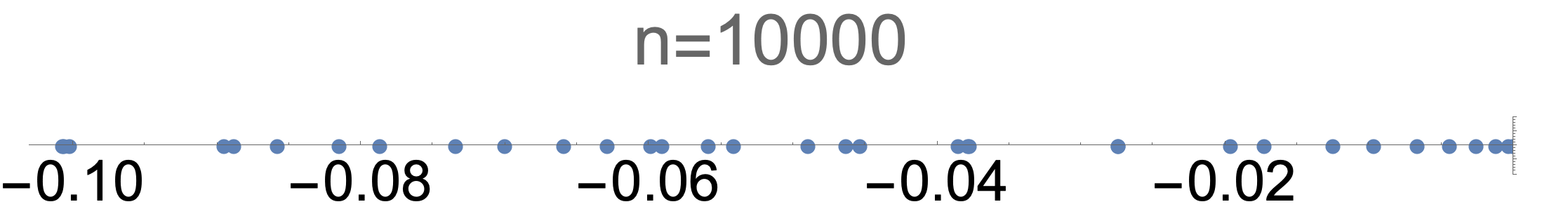}
\caption{Zeros of $p_{1,0}(w;n)$ with restricted real parts, for $n=1000, 5000$, and $10000$.}
\label{fig_rp1}
\end{figure}

\subsubsection{Imaginary} We illustrate this behavior with the example $(a,b) = (1/2,1)$. Figure \ref{fig_rphalf1} plots the zeros of the degree 62 polynomial $p_{1/2,1}(w;2000)$, with restricted imaginary parts for visibility.

\begin{figure}[ht]
\centering
 \includegraphics[scale=.21]{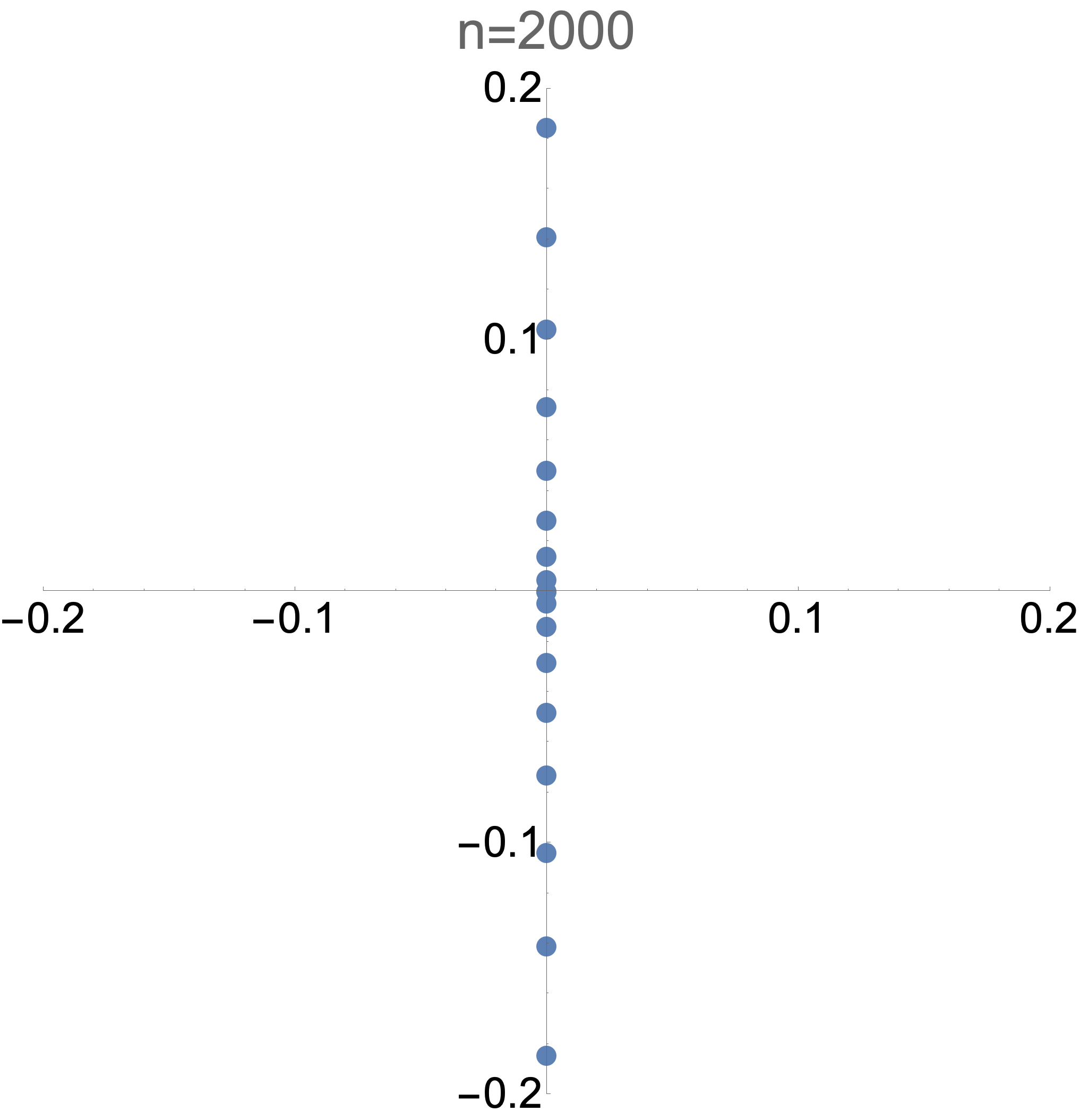}
\caption{Zeros of $p_{1/2,1}(w;2000)$ with restricted imaginary parts.}
\label{fig_rphalf1}
\end{figure}

\subsubsection{Radial segments} We illustrate this behavior with the example pairs $(a,b)$ in the set $\{ (1,1/3), (1,1/4)\}$. Figure~\ref{fig_rprad} plots the zeros of the degree 42 and 44 polynomials $p_{1,1/3}(w;2000)$ and $p_{1,1/4}(w;2000)$, respectively, with restricted moduli for visibility.

\begin{figure}[ht]
\centering
\includegraphics[scale=.22]{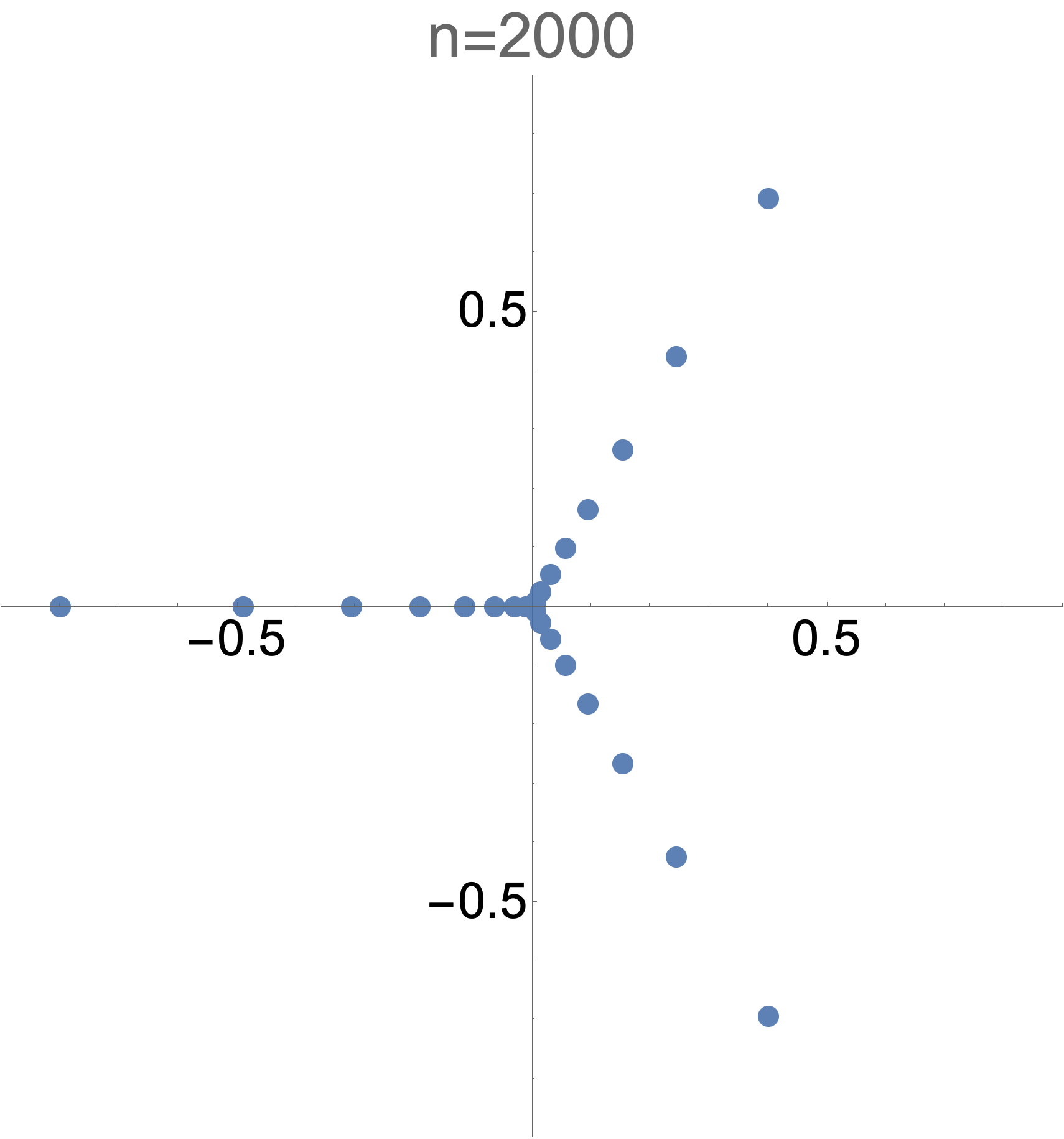} \hspace{.2in}
 \includegraphics[scale=.22]{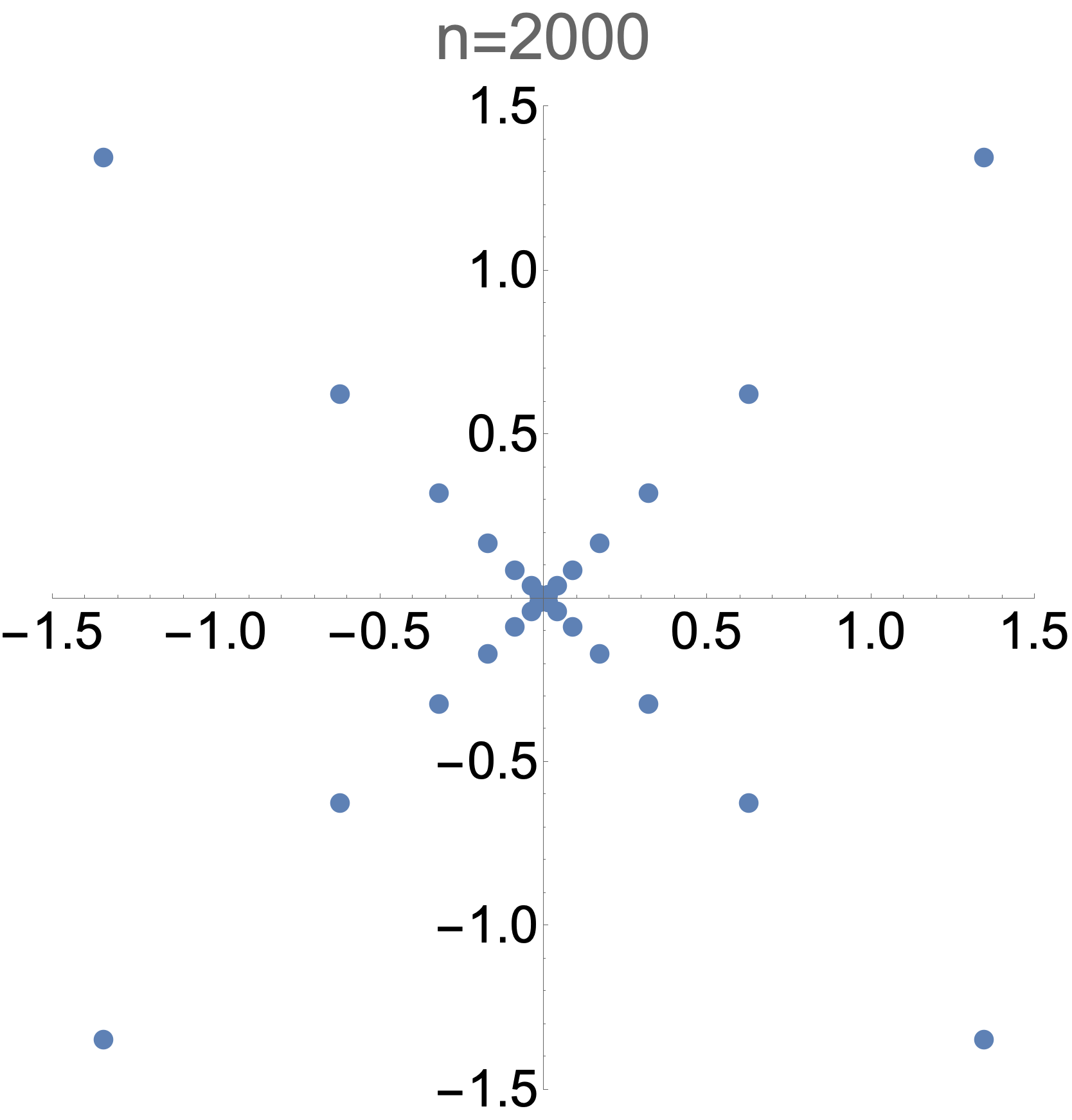}
\caption{Zeros of $p_{1,1/3}(w;2000)$ (left) and $p_{1,1/4}(w;2000)$ (right) with restricted moduli.}
\label{fig_rprad}
\end{figure}
 \subsubsection{Circular}
We illustrate this behavior with the example $(a,b) = (1/3,2/7)$. Figure \ref{fig_rpfrac} plots the zeros of the degree 26 polynomial $p_{1/3,2/7}(w;7114/21)=281936495 w^{26}+567030825181 w^{19}+4450838 w^5$, with restricted moduli for visibility.

\begin{figure}[ht]
\centering
 \includegraphics[scale=.25]{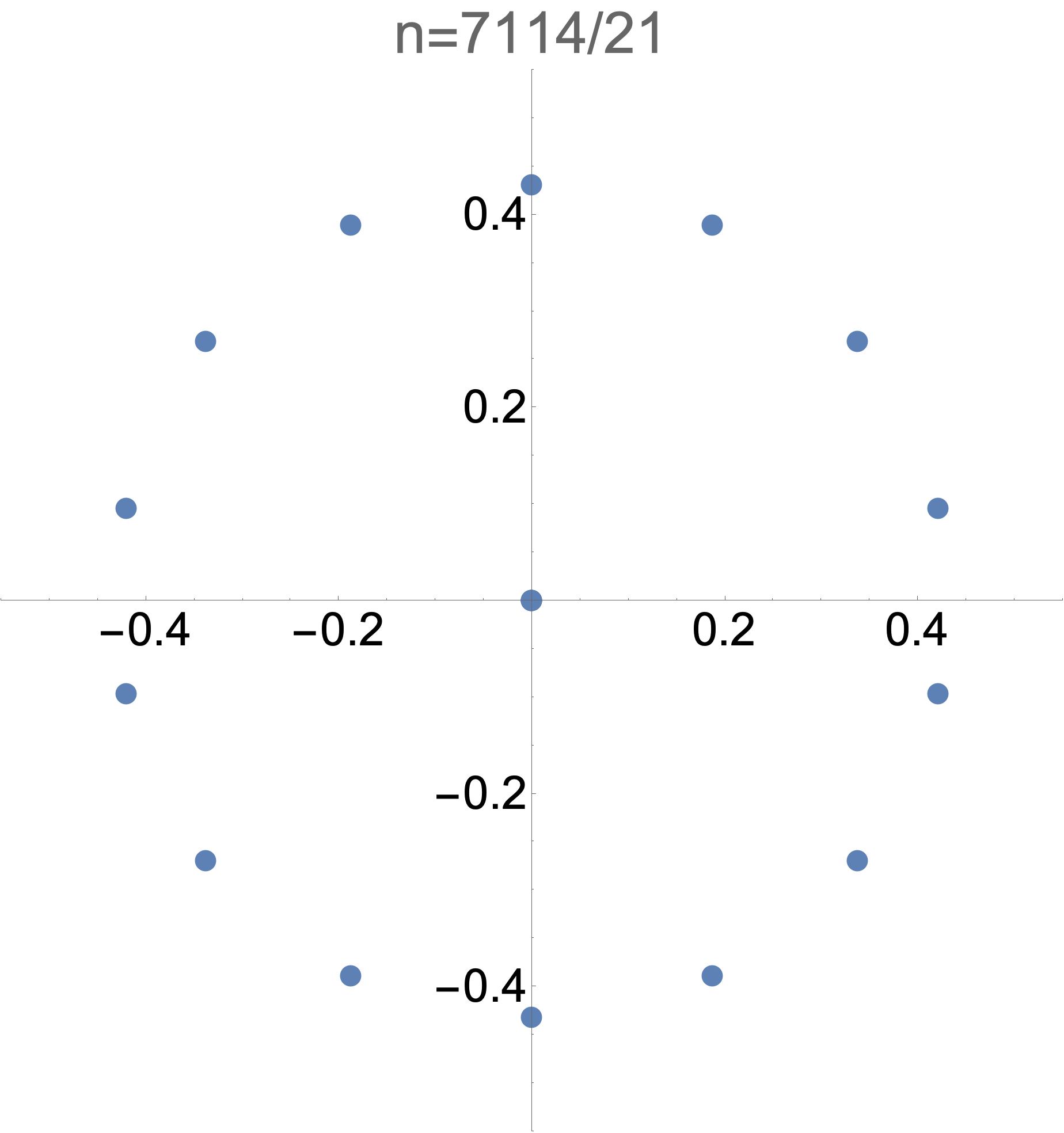}
\caption{Zeros of $p_{1/3,2/7}(w;7114/21)$ with restricted moduli.}
\label{fig_rpfrac}
\end{figure}

\subsection*{Acknowledgments}

We thank the anonymous referees for providing useful feedback which improved the exposition. We also thank Joshua Males for helpful discussions regarding computation of the zeros of hook polynomials. The views expressed in this article are those of the authors and do not reflect the official policy or position of the U.S.\ Naval Academy, Department of the Navy, the Department of Defense, or the U.S.\ Government. The third author is supported by National Science Foundation Grant DMS-2200728.  The fourth author is supported by a grant from the Simons Foundation (853830, LR).

\newpage

 \pdfbookmark[1]{References}{ref}
\LastPageEnding

\end{document}